\newcommand{\nc}{\newcommand}
\nc{\nt}{\newtheorem}
\nc{\bs}{\bigskip}
\nc{\dmo}{\DeclareMathOperator}
\nc{\I}{\mathcal{I}}
\nc{\C}{\mathcal{C}}
\nc{\T}{\mathcal{T}}
\nc{\A}{\mathcal{A}}
\nc{\B}{\mathcal{B}}
\nc{\K}{\mathcal{K}}
\nc{\Y}{\mathcal{Y}}
\nc{\SI}{\mathcal{SI}}
\nc{\SK}{\mathcal{SK}}
\nc{\ST}{\mathcal{ST}}
\nc{\SIBK}{\mathcal{SIBK}}
\nc{\SBK}{\mathcal{SBK}}
\nc{\SC}{\mathcal{SC}}
\nc{\SL}{\mathcal{SL}}
\nc{\SN}{\mathcal{SN}}
\dmo{\Mod}{Mod}
\dmo{\PMod}{PMod}
\dmo{\SIP}{SIP}
\dmo{\SMod}{SMod}
\dmo{\Sp}{Sp}
\dmo{\SSp}{SSp}
\dmo{\Stab}{Stab}
\dmo{\Isom}{Isom}
\dmo{\Teich}{Teich}
\nc{\ia}{\hat i}
\nc{\Z}{\mathbb{Z}}
\nc{\R}{\mathbb{R}}
\begin{document}
\title{SIMPLY INTERSECTING PAIR MAPS IN THE MAPPING CLASS GROUP}
\author{LEAH R. CHILDERS }
\thanks{This paper is part of the author's PhD dissertation at Louisiana State University.  The author is deeply grateful to the guidance of her advisor, Tara Brendle.  The author would also like to thank the University of Glasgow and University of Chicago for their generous hospitality while working on this paper.  This research was supported by the VIGRE grant at LSU, DMS-0739382. The author would also like to thank Joan Birman, Tara Brendle, Tom Church, Pallavi Dani, Benson Farb, Charlie Egedy, Rolland Trapp, Chris Leininger, and Dan Margalit for their discussions about the ideas in this paper.} 

\begin{abstract}
The Torelli group, $\I(S_g)$, is the subgroup of the mapping class group consisting of elements that act trivially on the homology of the surface.   There are three types of elements that naturally arise in studying $\I(S_g)$: bounding pair maps,  separating twists, and simply intersecting pair maps (SIP-maps).  Historically the first two types of elements have been the focus of the literature on $\I(S_g)$, while SIP-maps have received relatively little attention until recently, due to an infinite presentation of $\I(S_g)$ introduced by Andrew Putman that uses all three types of elements.  We will give a topological characterization of the image of an SIP-map under the Johnson homomorphism and Birman-Craggs-Johnson homomorphism.  We will also classify which SIP-maps are in the kernel of these homomorphisms.  Then we will look at the subgroup generated by all SIP-maps, $\SIP(S_g)$, and show it is an infinite index subgroup of $\I(S_g)$.  
\end{abstract}

\maketitle
\section{Introduction}
Let $S_{g,b,n}$ be an oriented surface of genus $g$ with $b$ boundary components and $n$ punctures.  Our convention is that boundary components are always fixed pointwise. Further we will often omit an index if it is $0$ (sometimes $1$, when noted as such). We define the \emph{mapping class group of $S_g$}, $\Mod(S_g)$, to be:
$$ \Mod(S_g) := \textrm{Homeo}^+(S_g)/\textrm{Homeo}^+_0(S_g)$$
where $\textrm{Homeo}^+(S_g)$ is the group of of orientation preserving homeomorphisms of $S_g$ and $\textrm{Homeo}^+_0(S_g)$ is the normal subgroup consisting of elements isotopic to the identity. Thus $\Mod(S_g)$ is the group of isotopy classes of orientation preserving self-homeomorphisms of a surface. See \cite{birman}, \cite{primer}, and \cite{ivanovsurvey} for background information. A subgroup of the mapping class group of primary importance is the Torelli group, $\I(S_g)$,  the kernel of the well-known \emph{symplectic representation} of the mapping class group.  Mapping classes act naturally on the first homology of the surface and preserve the intersection form, giving rise to a surjective map to $\text{Sp}(2g, \Z)$ (see Chapter 7 of \cite{primer}).  This action is known as the \emph{symplectic representation}.
 $$1 \rightarrow \I(S_g) \rightarrow \Mod(S_g) \rightarrow \text{Sp}(2g,\Z) \rightarrow 1$$
Equivalently, $\I(S_g)$ is the subgroup of $\Mod(S_g)$ acting trivially on the homology of the surface.  

There are three types of elements that naturally arise in studying $\I(S_g)$: bounding pair maps (BP-maps), Dehn twists about separating curves, and simply intersecting pair maps (SIP-maps).  Historically the first two types of elements have been the focus of the literature on $\I(S_g)$.   For example, in \cite{johnson1}, \cite{johnsonabelian}, and \cite{johnson2} Johnson showed that when $g \geq 3$ BP-maps generate $\I(S)$ and further that Dehn twists about separating curves generate an infinite index subgroup of $\I(S_g)$, called the \emph{Johnson kernel}, $\K(S)$.  However, SIP-maps have been brought to the forefront due to an infinite presentation of $\I(S_g)$ introduced by Putman that uses all three types of elements \cite{putmaninfinite}.   While Putman's presentation includes an infinite number of generators, he is able to classify all relations amoung these generators into a finite number of classes.  Thus the hope is that a better understanding of SIP-maps might yield insight into answering the question: Is $\I(S_g)$ finitely presentable when $g \geq 3$?

The purpose of this paper is to investigate SIP-maps in their own right.  Moreover, we will compare and contrast SIP-maps with the standard elements of $\I(S_g)$, that is with BP-maps and Dehn twists about separating curves.  In Section~\ref{section:basicfacts}, we will begin our discussion of SIP-maps by showing

\begin{main}[Corollary~\ref{cor:pseudoanosov}]
Let the commutator $f = [T_a,T_b]$ be an SIP-map with associated lantern $L$.  Then $f$ is pseudo-Anosov on $L$.
\end{main}

In addition, we will discuss how the same two curves can yield several distinct SIP-maps.  Then in Section~\ref{section:sipgroup}, we consider the group generated by SIP-maps, which we call $\SIP(S_g)$.  In light of Johnson's work regarding BP-maps and separating twists, it is natural to ask: Is $\SIP(S_g) = \I(S_g)?$  If not, what is the index of  $\SIP(S_g)$ in $\I(S_g)$? 

We will answer these questions by giving a topological description of the image of SIP-maps under the \emph{Johnson homomorphism}, $\tau : \I(S_{g,1}) \longrightarrow \wedge^3 H$ where $H = H_1(S_{g,1}, \Z)$.  For the most part we will simply think of $H$ as an abelian group.  The Johnson homomorphism is one of the classical abelian quotients of $\I(S_{g,1})$.  The abelianization of $\I(S_{g,1}) \cong \wedge^3 H \oplus B$ where $B$ is a number of copies of $\Z_2$.  Of the two pieces of the abelianization of $\I(S_{g,1})$, one is captured by $\tau$ and the other by the Birman-Craggs-Johnson homomorphism $\sigma$.  For that reason we will calculate the image of an SIP-map under both these homomorphisms.

In order to do this calculation we first rewrite SIP-maps as the product of five BP-maps.  We use this rewriting to show:

\begin{main}[Proposition~\ref{prop:johnsonsip}]
The image under $\tau$ of an SIP-map whose associated lantern has boundary components $w, x, y,$ and $z$ is $\pm x \wedge y \wedge z$.  
\end{main}

We will use this same rewriting to find a topological description of the image under $\sigma$ of an SIP-map.  Further, from these calculations, we are also able to deduce that $\SIP(S_g) \neq \I(S_g)$ by observing that SIP-maps are in the kernel of the so-called ``contraction map."  Further, the contraction map shows:

\begin{main}[Corollary~\ref{cor:infiniteindex}]
The group $\SIP(S_g)$ is an infinite index subgroup of $\I(S_g)$.
\end{main}

In addition, we are able to characterize which SIP-maps are in ker $\tau = \K(S)$.

\begin{main}[Corollary~\ref{cor:sipkernel}]
An SIP-map $f$ is an element of $\K(S)$ if and only if the lantern associated with $f$ has a boundary component that is null-homologous or if two boundary components are homologous. 
\end{main}

In Section \ref{section:relations} we will show how SIP-maps can be used to reinterpret one of Johnson's relations about BP-maps.  

Then in Section \ref{section:bcj} we characterize the image of SIP-maps under the \emph{Birman-Craggs-Johnson homomorphism}, $\sigma: \I \rightarrow B_3$, where $B_3$ is a $\Z_2$-vector space of Boolean (square free) polynomials with generators corresponding to non-zero elements of $H_1(S,\Z_2)$ \cite{johnsonbcj}.  We show:
 
\begin{main}[Proposition~\ref{prop:sipbcj} and Corollary~\ref{cor:sipbcj}]
Let $\sigma$ be the Birman-Craggs-Johnson homomorphism.  Let $f$ be an SIP-map with associated lantern $L$.  If none of the boundary components of L are null-homologous, then $\sigma(f)$ is a cubic polynomial in the homology classes of the boundary components of $L$.  Further $\sigma(f) = 0$ if and only if one of the boundary components of $L$ is null-homologous.
\end{main}
Since $\SIP(S_g)$ is not $\I(S_g)$, it is natural to ask:  What is the precise structure of $\SIP(S_g)$?  For example, since we know the abelian quotient of $\I(S_g)$ is captured by $\tau$ and $\sigma$, we can ask the following:

\begin{mainproblem}
\label{abelian}
Is $\I(S_g)/\SIP(S_g)$ abelian?
\end{mainproblem}
Building on work of Johnson, to answer Problem~\ref{abelian} it will suffice to establish if the intersection of the Johnson kernel and the Birman-Craggs-Johnson kernel lies in $\SIP(S_g)$ \cite{johnson3}.   In the main results above, we have characterized which SIP-maps are in the Johnson kernel, $\K(S)$, and which are in the Birman-Craggs-Johnson kernel. It remains to investigate the converse:

\begin{mainproblem}
Which elements of the Johnson kernel, $\K(S_g)$, and Birman-Craggs-Johnson kernel lie in $\SIP(S_g)$?
\end{mainproblem}

While Johnson has given a completely algebraic characterization of the Birman-Craggs-Johnson kernel \cite{johnson3}, this kernel is still not well understood in terms of BP-maps, separating twists, and SIP-maps, all of which have a natural topological structure. 
 
\section{Background}
\label{section:background}
\textbf{Basic Definitions.} We will refer to a simple closed curve as a \emph{curve} unless stated otherwise and we will often not distinguish between a curve and its isotopy class unless needed.

The simplest infinite order element in $\Mod(S)$ is a (right) \emph{Dehn twist} about a simple closed curve $c$, denoted $T_c$.  One can think of this map as cutting the surface along $c$ and twisting a neighborhood of one of the boundary components $360^{\circ}$, and then gluing the surface back together along $c$.  For example, in Figure~\ref{figure:dehntwist}, we see the image of the curve $d$ under the mapping class $T_c$.

\begin{figure}[htb]
\centerline{\includegraphics[scale=.5]{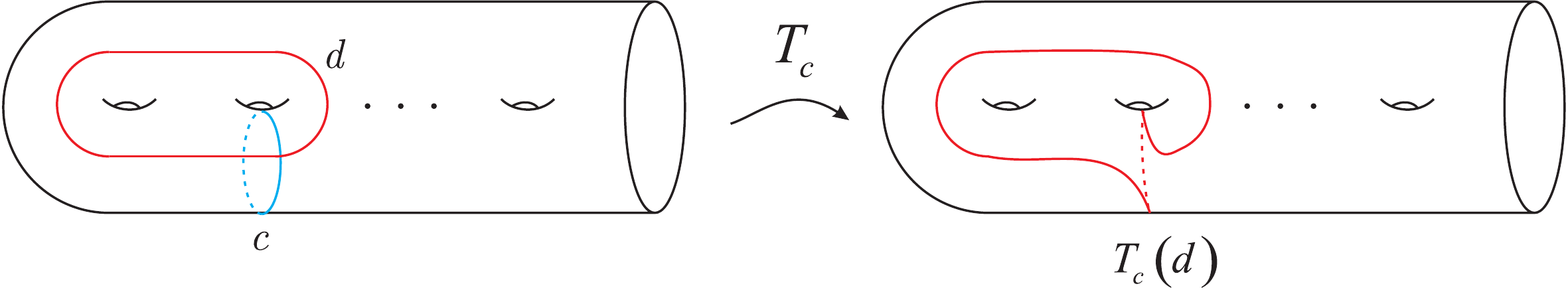}}
\caption{An example of the image of a curve under a Dehn twist.}
\label{figure:dehntwist}
\end{figure}

For completeness, note that Dehn twists are basic elements of the mapping class group in the following sense.

\begin{thm}[Dehn, \cite{dehn}]
The mapping class group, $\Mod(S)$, is generated by finitely many Dehn twists about simple closed curves.
\end{thm}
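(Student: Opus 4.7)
The plan is to prove this by induction on the genus $g$, using the action of $\Mod(S_g)$ on the set of isotopy classes of non-separating simple closed curves. The base cases are $g=0$, where $\Mod(S_0)$ is trivial (and hence generated by the empty set of twists), and $g=1$, where $\Mod(S_1) \cong \mathrm{SL}(2,\Z)$ is generated by two matrices corresponding to Dehn twists about a meridian and a longitude.

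For the inductive step, I would fix a non-separating simple closed curve $c \subset S_g$ and argue in three stages. First, I would show that $\Mod(S_g)$ acts transitively on isotopy classes of non-separating simple closed curves, via the change-of-coordinates principle (any such curve has a homeomorphic regular neighborhood and complement, so there is a homeomorphism carrying one to another). Second, I would strengthen this to a \emph{connectivity} statement: any two non-separating simple closed curves $a, b$ can be linked by a chain $a = c_0, c_1, \dots, c_k = b$ where consecutive curves either are disjoint or intersect once. Then, whenever $c_i$ and $c_{i+1}$ meet once, the product $T_{c_i} T_{c_{i+1}} T_{c_i}$ sends $c_i$ to $c_{i+1}$; whenever they are disjoint, one can insert an intermediate curve intersecting each once to reduce to the previous case. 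Hence the subgroup $G \leq \Mod(S_g)$ generated by a suitable finite collection of Dehn twists acts transitively on isotopy classes of non-separating curves.

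Third, I would exploit the Alexander method / cutting argument: let $\Stab(c) \leq \Mod(S_g)$ be the stabilizer of the isotopy class of $c$. There is a surjection
\[
\Stab(c) \twoheadrightarrow \Mod(S_{g-1,2})
\]
obtained by cutting along $c$, with kernel generated by $T_c$ (this is the capping / Birman-type exact sequence). By the inductive hypothesis $\Mod(S_{g-1,2})$ is generated by finitely many Dehn twists, each of which lifts to a Dehn twist in $\Stab(c) \subset \Mod(S_g)$. Combined with transitivity from the second step, this shows that $\Mod(S_g)$ itself is generated by finitely many Dehn twists: any $\phi \in \Mod(S_g)$ first gets multiplied by an element of $G$ so that it lies in $\Stab(c)$, and then the inductive generators finish the job.

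The main obstacle is the connectivity step, i.e., establishing that one can move between any two non-separating curves by Dehn twists supported on curves intersecting at most once. This requires a careful geometric argument (one may argue by induction on the geometric intersection number $i(a,b)$ and perform a ``surgery'' to produce a curve meeting both $a$ and $b$ fewer times). Once this combinatorial/geometric fact is in hand, the rest of the argument is essentially bookkeeping through the cutting exact sequence and the inductive hypothesis. An efficient rephrasing in terms of the arc-and-curve complex or the cut system complex would produce an explicit finite generating set; Lickorish's original argument yields $3g-1$ twists, later reduced by Humphries to $2g+1$.
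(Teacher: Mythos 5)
The paper does not prove this statement: it is quoted as a classical background fact, attributed to Dehn with a citation, and used only to frame the later discussion. So there is no in-paper argument to compare yours against; I can only assess your sketch on its own terms. What you outline is the standard Lickorish-style proof (as in Chapter 4 of Farb--Margalit), and the overall strategy --- transitivity on non-separating curves via chain connectivity, then an inductive cutting argument --- is the right one.

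That said, two steps as written have genuine gaps. First, your induction is on the genus of a \emph{closed} surface, but cutting along $c$ immediately produces $S_{g-1,2}$, a surface with boundary, to which your inductive hypothesis does not apply. The correct scheme is a double induction on genus and the number of boundary components/punctures, with the Birman exact sequence used to pass between them; without this the induction does not close up. Second, the claimed surjection $\Stab(c) \twoheadrightarrow \Mod(S_{g-1,2})$ with kernel $\langle T_c \rangle$ is misstated: the cutting homomorphism naturally runs $\Mod(S_{g-1,2}) \to \Mod(S_g)$, its image lies in (but does not exhaust) the stabilizer --- it misses mapping classes that reverse the orientation of $c$ or exchange its two sides, which must be realized separately by explicit products of twists --- and its kernel is generated by $T_{\delta_1}T_{\delta_2}^{-1}$ for the two boundary curves, not by $T_c$. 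Finally, note that your argument as described first yields generation by the \emph{infinite} set of all twists about non-separating curves; passing to a finite set requires the additional observation that $fT_af^{-1} = T_{f(a)}$ together with change of coordinates lets you express every such twist in terms of a fixed finite collection. You correctly identify the chain-connectivity lemma as the technical heart, but it, too, is only asserted. As a roadmap your proposal is sound; as a proof it is not yet complete.
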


The \emph{algebraic intersection number} of a pair of transverse, oriented curves $\alpha$ and $\beta$ on a surface, denoted $\hat{i}(\alpha,\beta)$, is the sum of the indices of the intersection points of $\alpha$ and $\beta$, where an intersection point has index $+1$ if the orientation of the intersection agrees with the orientation of the surface, and $-1$ otherwise.

The \emph{geometric intersection number} of a pair of curves $\alpha$ and $\beta$ is defined as
$$ i(\alpha, \beta) = \displaystyle \min_{a \in \alpha, \, b \in \beta} |a \cap b|.$$
Note this is well-defined on isotopy classes of curves (Chapter 1, \cite{primer}).

\textbf{Relations in $\Mod(S)$. } We will discuss several well-known relations in $\Mod(S)$ that will be used throughout this paper, most notably the so-called \emph{lantern relation}.  Proofs for all these can be found in Chapter 2 of \cite{primer}.

\begin{lem}
\label{lem:conjugationrelation}
Let $f \in \Mod(S)$ and $a$ be a curve on $S$.  Then $f T_a f^{-1} = T_{f(a)}$.
\end{lem}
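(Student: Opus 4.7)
The plan is to work at the level of homeomorphism representatives and use the explicit model definition of a Dehn twist. Fix a homeomorphism representative $F$ of the mapping class $f$ and a homeomorphism representative $\alpha$ of the curve $a$ (we will continue to write $f$ and $a$ when no confusion arises). Recall that $T_a$ is defined by choosing an orientation-preserving embedding $\phi \colon S^1 \times [0,1] \hookrightarrow S$ with $\phi(S^1 \times \{1/2\}) = a$, pulling back the standard twist $T$ on the annulus via $\phi$, and extending by the identity outside the image of $\phi$. The class of $T_a$ in $\Mod(S)$ is independent of the choice of $\phi$, which is exactly the flexibility I intend to exploit.

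The key observation is that since $F$ is an orientation-preserving homeomorphism of $S$, the composition $F \circ \phi \colon S^1 \times [0,1] \hookrightarrow S$ is itself an orientation-preserving embedding of the standard annulus, and it sends $S^1 \times \{1/2\}$ to $F(a)$, which represents the isotopy class $f(a)$. Hence $F \circ \phi$ is a valid parametrization for defining a Dehn twist about $f(a)$, and the resulting homeomorphism agrees on the nose with $(F \circ \phi) \circ T \circ (F \circ \phi)^{-1} = F \circ (\phi \circ T \circ \phi^{-1}) \circ F^{-1}$ on the image of $F \circ \phi$. Off of this image, both $F T_a F^{-1}$ and the model for $T_{f(a)}$ are the identity, so the two homeomorphisms agree pointwise. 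Passing to isotopy classes in $\Mod(S)$ then yields $f T_a f^{-1} = T_{f(a)}$.

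The only subtle point, and what I would expect to be the main thing to check carefully, is the orientation bookkeeping: if $F$ were orientation-reversing, then $F \circ \phi$ would give the opposite framing of the annulus and the conjugate would be a left twist $T_{f(a)}^{-1}$ instead. Since mapping classes in $\Mod(S)$ are by definition orientation-preserving, this potential sign issue does not arise, and the identification of $F \circ \phi$ with a legitimate parametrization of a regular neighborhood of $f(a)$ is justified. Once this is observed, the proof is essentially a formal calculation.
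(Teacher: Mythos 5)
Your argument is correct: conjugating the annulus model of the twist by an orientation-preserving homeomorphism $F$ produces the twist defined via the reparametrization $F\circ\phi$, which is a valid (orientation-preserving) annular neighborhood of $F(a)$, and both maps are the identity off that neighborhood. The paper itself gives no proof of this lemma, deferring to Chapter 2 of \cite{primer}, and the standard proof given there is essentially the one you wrote, including the orientation caveat you correctly flag as the only delicate point.
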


\begin{lem}
\label{lem:isotopyrelation}
Let $a$ and $b$ be curves on $S$.  Then $T_a = T_b$ if and only if $a$ is isotopic to $b$.
\end{lem}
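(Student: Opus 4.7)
The plan is to prove the two directions separately, with the forward direction being essentially formal and the converse requiring a case analysis based on the geometric intersection number $i(a,b)$.

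For the ``if'' direction, suppose $a$ is isotopic to $b$. Any isotopy of curves extends to an ambient isotopy of $S$ supported in a neighborhood of $a \cup b$. Since Dehn twists are defined via a local model in an annular neighborhood of the curve, this ambient isotopy carries a representative of $T_a$ to a representative of $T_b$; hence $T_a = T_b$ in $\Mod(S)$.

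For the converse, I would prove the contrapositive: if $a$ and $b$ are not isotopic, then $T_a \neq T_b$. Put $a$ and $b$ in minimal position and split into two cases. In the first case, $i(a,b) = 0$ but $a \not\simeq b$: then $a \cup b$ bounds no annulus, and using the change-of-coordinates principle I would produce a simple closed curve $c$ with $i(a,c)=0$ and $i(b,c) > 0$ (for example, a curve dual to $b$ in the surface obtained by cutting along $a$). Then $T_a(c) = c$ as isotopy classes, while $T_b(c) \neq c$, so $T_a \neq T_b$. In the second case, $i(a,b) = n > 0$: apply both twists to the curve $a$ itself. Then $T_a(a) = a$ by definition, whereas I would show $T_b(a) \neq a$ using the fact that $T_b(a)$ has essential intersection with $a$ after isotoping to minimal position.

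The main obstacle is establishing the non-equalities $T_b(c) \neq c$ and $T_b(a) \neq a$ rigorously as equalities of isotopy classes, not merely as homeomorphisms. The clean tool here is the standard lower bound
$$ i(T_b^k(c), c) \;\geq\; |k|\, i(b,c)^2 - i(c,c) $$
valid for any simple closed curves and any nonzero $k$, which can be proved via the bigon criterion applied to a minimal-position representative of $T_b(c)$ obtained by surgery along $b$. For $k=1$ and $c$ chosen so that $i(b,c) > 0$ and $i(c,c) = 0$, this immediately yields $i(T_b(c),c) > 0$, so $T_b(c)$ cannot be isotopic to $c$. Applying this with $c=a$ in the second case and with the auxiliary curve from the first case completes the argument.
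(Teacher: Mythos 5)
The paper offers no proof of this lemma, deferring instead to Farb--Margalit's \emph{Primer}, and your argument is essentially the standard one found there: isotopy extension for the forward direction, and for the converse the change-of-coordinates construction of an auxiliary curve $c$ with $i(a,c)=0$, $i(b,c)>0$ together with the intersection bound $i(T_b^k(c),c)\geq |k|\,i(b,c)^2$ to distinguish the twists. The proof is correct; the only point worth making explicit is that in your first case the dual curve $c$ exists precisely because $b$, being non-isotopic to $a$, remains essential and non-peripheral after cutting along $a$.
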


\textbf{Lantern Relation in $\Mod(S)$. } The \emph{lantern relation} is a relation in $\Mod(S)$ among 7 Dehn twists all supported on a subsurface of $S$ homeomorphic to a sphere with 4 boundary components (otherwise known as a lantern).  This relation was known to Dehn \cite{dehn}, and later rediscovered by Johnson \cite{johnsonhomeos}.  The lantern relation will be particularly important in Lemma~\ref{lem:factorsip} when writing an $\SIP$-map as the product of BP-maps.  Given curves $a, b, c,$ and $d$, so that $a, b, c,$ and $d$ bound a lantern, then the following relation holds where the curves are as in Figure~\ref{figure:lantern}.
$$T_aT_bT_cT_d = T_xT_yT_z.$$

\begin{figure}[htb]
\centerline{\includegraphics[scale=.5]{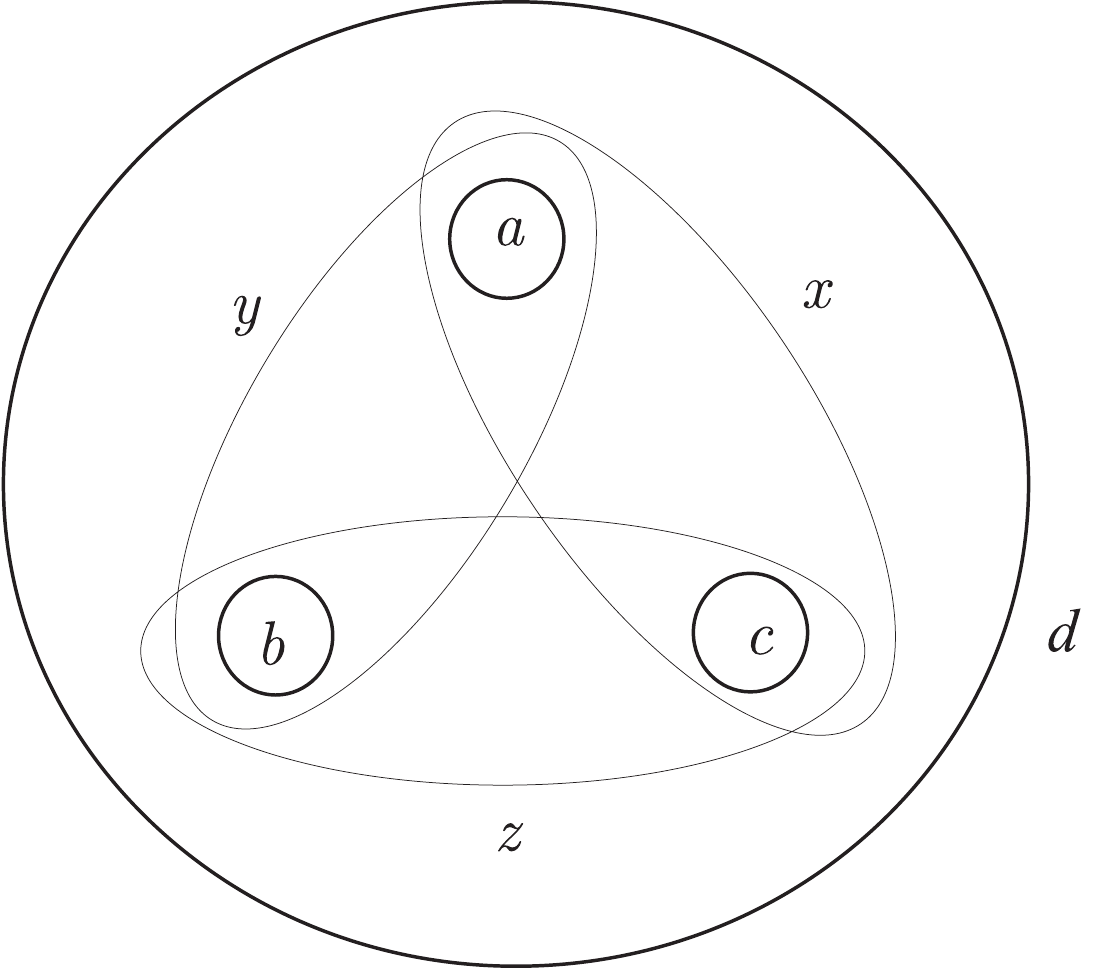}}
\caption{The curves in the lantern relation: $T_aT_bT_cT_d = T_xT_yT_z$}
\label{figure:lantern}
\end{figure}

\textbf{The Torelli Group.} A subgroup of the mapping class group of primary importance is the \emph{Torelli group}, $\I(S_g)$,  the kernel of the well-known \emph{symplectic representation} of the mapping class group.
 $$1 \rightarrow \I(S_g) \rightarrow \Mod(S_g) \rightarrow \text{Sp}(2g,\Z) \rightarrow 1$$
Equivalently, $\I(S_g)$ is the subgroup of $\Mod(S_g)$ acting trivially on the homology of the surface. Note that we will often refer to $H_1(S, \Z)$ simply as $H$. Further, because the symplectic group, $\text{Sp}(2g, \Z)$ is well understood, $\I(S)$ is often thought of as the ``mysterious" part of $\Mod(S)$.  Further when $g = 1$ the symplectic representation is faithful, so $\I(S) = 1.$  

There are three types of elements that naturally arise in studying $\I(S)$:

\begin{enumerate}
\item \textbf{Bounding Pair Maps.}  Given two disjoint, non-separating, homologous simple closed curves $c$ and $d$, a \emph{bounding pair map} (BP-map) is the product $T_cT_d^{-1}$.  If $S = S_{g,1}$, then we say a BP-map has \emph{genus k} if the subsurface whose boundary is $c \cup d$ has genus $k$.\\ 

\begin{figure}[htb]
\centerline{\includegraphics[scale=.4]{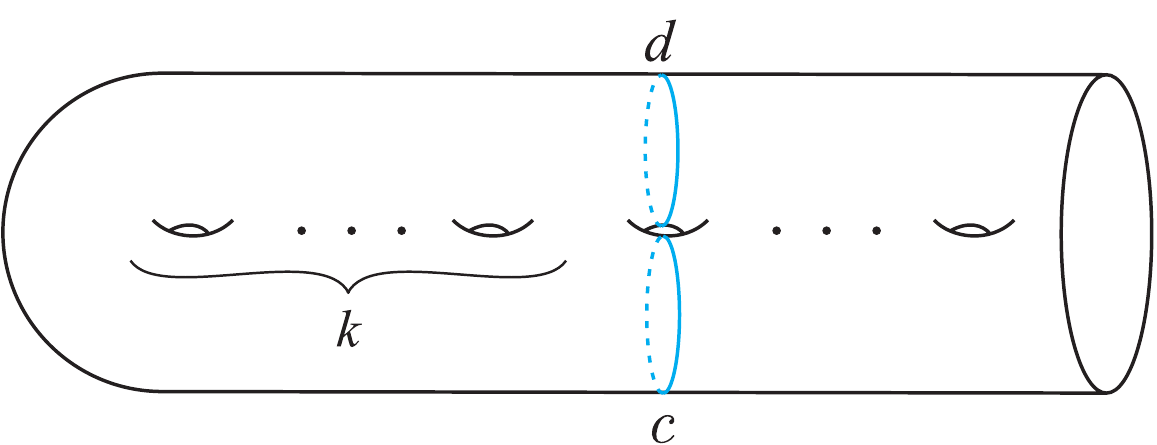}}
\label{figure:bp}
\caption{A genus $k$ bounding pair.}
\end{figure}

\item \textbf{Separating Twists.}  A simple closed curve $c$ is called \emph{separating} if $S-c$ is not connected.  A \emph{separating twist} is  a Dehn twist about a separating curve. If $S = S_{g,1}$, then we say a separating twist has \emph{genus k} if the subsurface whose boundary is $c$ has genus $k$.  As a side note, when $g = 2$, $\I(S_2) = \K(S_2)$, the subgroup generated by separating twists, as there are no BP-maps in $\I(S_2)$ \cite{mess}.

\begin{figure}[htb]
\centerline{\includegraphics[scale=.4]{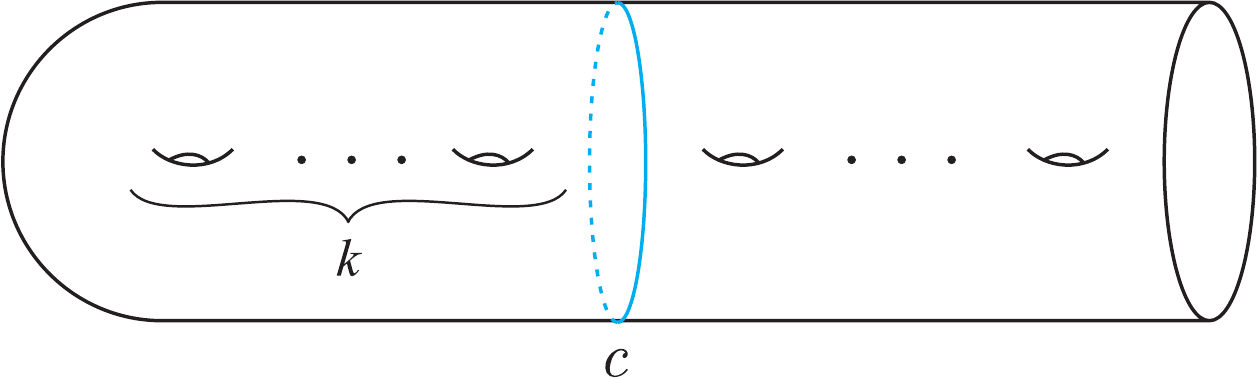}}
\label{figure:sep}
\caption{A genus $k$ separating curve.}
\end{figure}

\item \textbf{Simply Intersecting Pair Maps.} Let $c$ and $d$ be simple closed curves so that $\hat{i}(c,d) = 0$ and $i(c,d) = 2$.  Then a \emph{simply intersecting pair map} (SIP-map) is the commutator of the Dehn twists about the two curves, that is $[T_c,T_d]=T_cT_dT_c^{-1}T_d^{-1}.$\\

\begin{figure}[htb]
\centerline{\includegraphics[scale=.4]{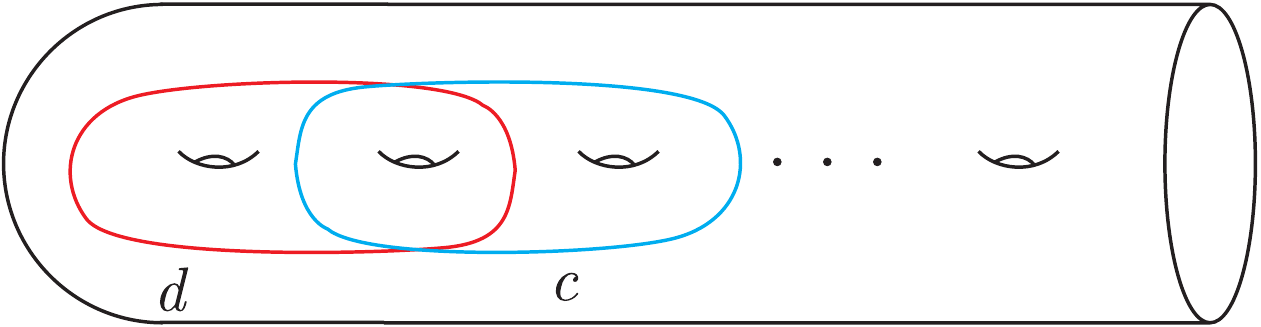}}
\label{figure:sip}
\caption{Simple closed curves $c$ and $d$ that form an SIP-map.}
\end{figure}

\end{enumerate}

In Chapter 7 of \cite{primer}, Farb-Margalit outline how a Dehn twist acts on the homology of a surface.  Let $a$ and $b$ be oriented curves on a surface $S$.  Then
$$[T^k_b(a)] = [a] + k \cdot \hat{i}(a,b)[b].$$
Using this and Lemma~\ref{lem:conjugationrelation}, it is straightforward to show SIP-maps are in $\I(S)$ since 
$$[T_c,T_d]=T_c(T_dT_c^{-1}T_d^{-1})=T_cT_{T_d(c)}^{-1}$$ 
The curves $c$ and $T_d(c)$ are homologous because $[T_d(c)] = [c] + \hat{i}(c,d)[d]=[c]$.  Since twists about homologous curves have the same image under the symplectic representation (see Chapter 7 of \cite{primer} for further details), we can conclude that $T_cT_{T_d(c)}^{-1} \in \I(S)$.  Observe that in essence, SIP-maps are a natural generalization of BP-maps in that they are "inverse products" of Dehn twists about homologous curves and could further be generalized by considering commutators of Dehn twists about curves with higher geometric intersection number which still have algebraic intersection number 0.

While the first two types of elements have been the focus of the literature on $\I(S)$,  SIP-maps have been brought to the forefront due to an infinite presentation of $\I(S_g)$ introduced by Putman that uses all three types of elements \cite{putmaninfinite}.

\section{Basic Facts About SIP-maps}
\label{section:basicfacts}
In this section we will further investigate the structure of SIP-maps.  We begin by showing they are pseudo-Anosov elements on a lantern.

\textbf{Classification of Mapping Classes. }
Mapping classes are often classified according to whether or not they fix any curves in the surface, as follows.
A curve, $c$, is called a {\it reducing curve} for a mapping class $f$, if $f^n(c)=c$ for some $n$.

\textbf{Nielsen-Thurston Trichotomy.}
We are able to classify any mapping class, $f$, into one of the following categories:
\begin{enumerate}
\item The mapping class, $f$, is a \emph{finite order} element; that is, there exists an $n$ such that $f^n=id$
\item  The mapping class, $f$, is \emph{reducible}; that is it fixed a collection of pairwise disjoint curves, or
\item The mapping class, $f$, is \emph{pseudo-Anosov} if it is not finite order or reducible.
\end{enumerate}

There is an equivalent, somewhat more standard and more technical, definition of a pseudo-Anosov mapping class given in terms of measured foliations.  We will not need to use this definition or the machinery of measured foliations explicitly in this work.

There is non-trivial overlap between the finite order and reducible elements.  In order to make this a true trichotomy, we can replace the condition of having a reducing curve with that of having an \emph{essential reducing curve}: a reducing curve $c$ is {\it essential} for a mapping class $h$ if 
for each simple close curve $b$ on the surface such that $i(c,b) \not= 0$, and for each 
integer $m \not= 0$, the classes $h^m(b)$ and $b$ are distinct. 

\begin{thm} [Birman-Lubotzky-McCarthy, \cite{blm}]  
For every mapping class $h$ there exists a system (possibly empty) of essential reducing curves.  Moreover, the system is unique up to isotopy, and there is an $n$ such that
cutting along the system, the restriction of $h^n$ to each component of the cut-open surface is either pseudo-Anosov, finite order, or reducible. 
\end{thm}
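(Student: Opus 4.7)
The plan is to construct the canonical reducing system as the collection $\Sigma(h)$ of all isotopy classes of essential reducing curves for $h$, and then verify the three assertions of the theorem: realizability as a finite disjoint curve system, uniqueness, and the trichotomy on the complementary pieces. The first move is to show that any two essential reducing curves $c, c'$ for $h$ must satisfy $i(c,c') = 0$. Indeed, if $i(c,c') \neq 0$, then by essentiality of $c$ applied to the test curve $b = c'$ we have $h^m(c') \neq c'$ for every $m \neq 0$, but since $c'$ is itself a reducing curve there exists some $m \neq 0$ with $h^m(c') = c'$, a contradiction. Combined with the classical bound that any pairwise disjoint collection of distinct isotopy classes on $S_g$ has cardinality at most $3g-3$, this forces $\Sigma(h)$ to be finite and realizable by a disjoint system; uniqueness up to isotopy is immediate, since essentiality is an intrinsic property of $(h, c)$.

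Next I would verify that $\Sigma(h)$ genuinely reduces $h$. For each $c \in \Sigma(h)$ there is some $n_c$ with $h^{n_c}(c) = c$, and taking $n$ to be the least common multiple of these exponents gives a single integer so that $h^n$ fixes every element of $\Sigma(h)$. After cutting $S_g$ along $\Sigma(h)$, the mapping class $h^n$ descends to a mapping class on each component $Y$. The Nielsen--Thurston trichotomy applied component-wise then yields that $h^n|_Y$ is pseudo-Anosov, finite order, or reducible, which is exactly the statement to be proved. To confirm maximality, I would show that any reducing curve for $h^n|_Y$ would lift back to an essential reducing curve for $h$ on $S_g$ disjoint from $\Sigma(h)$, so the recursive picture is consistent.

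The main obstacle, and the technical heart of the theorem, is the Nielsen--Thurston trichotomy itself for an individual mapping class: proving that an infinite-order mapping class with no essential reducing system is pseudo-Anosov. This requires either Thurston's theory of measured foliations together with his compactification of Teichm\"uller space, or equivalently Bers' argument via minimizing translation length along a Teichm\"uller geodesic, whose endpoints produce the stable and unstable foliations. A secondary technical point is the interplay between isotopy in $S_g$ and isotopy in a component of $S_g \setminus \Sigma(h)$: one must verify that the various notions of ``fixing'' a curve up to isotopy are compatible when passing between the cut-open surface and the original, which is where being careful with the definition of essential reducing curve pays off.
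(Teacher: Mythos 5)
This theorem is quoted from Birman--Lubotzky--McCarthy and the paper supplies no proof of it, so your proposal can only be measured against the standard argument rather than against anything in the text. Most of your outline is sound: the observation that two essential reducing curves $c$ and $c'$ must satisfy $i(c,c')=0$ --- because essentiality of $c$ forbids any curve meeting $c$ from being periodic under $h$, while a reducing curve is periodic by definition --- is exactly the right key lemma, and combined with the $3g-3$ bound it yields a finite, simultaneously realizable, canonically defined system. The appeal to the Nielsen--Thurston trichotomy for the final clause is also technically sufficient for the statement \emph{as printed here}, since this formulation still allows ``reducible'' as an outcome on the cut-open pieces, so that clause is satisfied by any mapping class of any component.

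The genuine gap is your maximality step. It is not true that a reducing curve for $h^n|_Y$ lifts back to an \emph{essential} reducing curve for $h$: being essential is far stronger than being a reducing curve, and the two notions diverge badly. For instance, a finite-order reducible mapping class has many reducing curves but an empty canonical reduction system, precisely because (as the paper itself notes) a fixed curve of a finite-order map is never essential. The real content of the Birman--Lubotzky--McCarthy theorem, in its usual formulation, is that after cutting along the system of all essential reducing curves the restriction of $h^n$ to each piece is pseudo-Anosov or finite order --- never essentially reducible --- and proving that requires showing that an essential reducing curve of a restriction, pushed back into the closed surface and saturated under the $h$-orbit, remains essential for $h$; this needs an analysis of how curves crossing it behave under iteration, not the one-line lifting you describe. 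Since the paper's version of the statement keeps ``reducible'' as an allowed outcome, your proposal does establish what is literally asserted, but the maximality paragraph as written is incorrect and should either be removed or replaced by the actual argument from \cite{blm}.
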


Further a fixed curve of a finite order mapping class is never essential, 
because there is always an $n$ such that $h^n=id$ after cutting open along all the
other curves. 

The {\it canonical reduction system} for a mapping class, $f$, is the collection of all essential reducing curves for $f$. This classification, as well as the canonical reduction system, will be used throughout this paper.

Using work of Atalan-Korkmaz we will classify SIP-maps on a lantern, $S_{0,4}$.  They make the following characterizations of reducible elements on the lantern.

\begin{lem}[Atalan-Korkmaz, Lemma 3.4, \cite{atalankorkmaz}]
The reducible elements of $\Mod(S_{0, 4})$ consist of conjugates of nonzero powers of $T_a$, $T_b$ and $T_aT_b$.
\end{lem}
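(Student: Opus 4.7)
The plan is to combine the classification of essential simple closed curves on a four-holed sphere with the algebraic structure of $\Mod(S_{0,4})$ modulo its center. First, I would observe that any essential simple closed curve on $S_{0,4}$ separates the four boundary components into two pairs, so there are exactly three isotopy classes of essential simple closed curves. Taking $a$ and $b$ as in the statement together with a third curve $c$, the lantern relation presents $T_aT_bT_c$ as a product of the four boundary twists of $S_{0,4}$. Since these boundary twists generate the center of $\Mod(S_{0,4})$, the identity $T_c \equiv (T_aT_b)^{-1}$ holds modulo the center; hence any conjugate of a nonzero power of $T_c$ is, modulo center, a conjugate of a nonzero power of $T_aT_b$, matching the statement.

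Next, I would analyze the stabilizer of a single essential curve, say $a$, in $\Mod(S_{0,4})$. Cutting $S_{0,4}$ along $a$ produces two pairs of pants, each with mapping class group free abelian on its three boundary twists. Consequently, the stabilizer of the isotopy class of $a$ is generated by $T_a$ together with the four original boundary twists, so modulo the center it is the infinite cyclic group generated by the image of $T_a$. The same analysis gives analogous statements for the stabilizers of $b$ and $c$.

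Finally, I would apply the Nielsen--Thurston trichotomy together with a root-extraction argument in the quotient $\Mod(S_{0,4})/Z$, which is isomorphic to $\PMod(S_{0,0,4})$, the pure mapping class group of the four-punctured sphere, known to be a free group of rank two on the images of $T_a$ and $T_b$. If $f$ is reducible, then $f$ permutes its essential reducing curves, which lie in the three-element set $\{a,b,c\}$; hence some power $f^n$ with $n \mid 6$ fixes one such curve individually, and by the stabilizer analysis the image of $f^n$ in $F_2$ is conjugate to a power of $T_a$, $T_b$, or $T_aT_b$. In a free group, unique roots imply that any element whose nontrivial power lies in a maximal cyclic subgroup lies in that subgroup itself, so the image of $f$ is already a conjugate of a nonzero power of one of $T_a$, $T_b$, $T_aT_b$. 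Lifting back gives the claim.

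The main obstacle will be controlling the lift from $F_2$ back to $\Mod(S_{0,4})$: boundary twists are central and themselves reducible, so one must verify that they do not interfere with the classification, and that the final lift produces a conjugate of a nonzero power of $T_a$, $T_b$, or $T_aT_b$ in $\Mod(S_{0,4})$ on the nose rather than only modulo center. A careful argument will show that absorbing any residual central factor into the conjugator (using commutativity of central elements with everything) suffices.
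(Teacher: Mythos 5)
The paper offers no proof of this lemma---it is quoted from Atalan--Korkmaz---so there is nothing internal to compare against; judged on its own, your outline has a sound group-theoretic skeleton (pass to the quotient by the boundary twists, identify it with $F_2=\langle t_a,t_b\rangle$, analyze curve stabilizers, use centralizers and unique roots in free groups, and use the lantern relation to handle the third curve), but two steps are genuinely wrong as written. First, it is false that $S_{0,4}$ carries exactly three isotopy classes of essential simple closed curves. There are three \emph{topological types}, indexed by the induced partition of the four boundary components into pairs, but each type contains infinitely many isotopy classes (for instance $a$ and $T_b(a)$ induce the same partition and are not isotopic); the essential curves are parametrized by slopes in $\mathbb{Q}\cup\{\infty\}$. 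So the step ``$f$ permutes its essential reducing curves, which lie in the three-element set $\{a,b,c\}$'' does not hold. It is repairable: any two non-isotopic essential curves on $S_{0,4}$ intersect, so the canonical reduction system of a reducible $f$ is a single isotopy class $d$ fixed by $f$ itself, and by change of coordinates $d=h(e)$ for some mapping class $h$ and some $e\in\{a,b,c\}$, after which you run your stabilizer argument on $h^{-1}fh$. But the repair must actually be made; as written the premise is false (and no power of $f$ is needed).

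Second, the lifting step you yourself flag as the main obstacle cannot be fixed in the way you propose. With boundary fixed pointwise, $1\to\Z^4\to\Mod(S_{0,4})\to F_2\to 1$ is a central extension of a free group, hence splits as a direct product $F_2\times\Z^4$, and conjugation acts trivially on the $\Z^4$ factor. A residual central factor therefore cannot be ``absorbed into the conjugator'': the element $T_aT_{\partial}$, with $\partial$ a boundary component, is reducible (it fixes $a$) and maps to $t_a$, yet it is conjugate to no power of $T_a$, $T_b$, or $T_aT_b$, because its $\Z^4$-coordinate is nonzero and conjugation-invariant. The statement is therefore only correct in a convention where boundary twists vanish (marked points, or isotopies free on the boundary), or when read modulo the center; that weaker version is all that Corollary~\ref{cor:pseudoanosov} actually requires, but your final paragraph claims more than is true.
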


Thus we are able to deduce the following.

\begin{cor}
\label{cor:pseudoanosov}
Let $a$ and $b$ be two curves with $i(a,b) = 2$ and $\hat{i}(a,b)=0$.  Then the SIP-map $f = [T_a,T_b]$ is pseudo-Anosov on a regular neighborhood of $a$ and $b$; that is, on a lantern, $S_{0,4}$.
\end{cor}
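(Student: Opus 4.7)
The plan is to apply the Nielsen--Thurston trichotomy to $f = [T_a, T_b]$, viewed as an element of $\Mod(S_{0,4})$ on the regular neighborhood of $a \cup b$, and to rule out both the finite-order and reducible cases.

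First, $f$ is nontrivial: two Dehn twists commute if and only if their defining curves are disjoint or isotopic, and $i(a,b) = 2$ rules this out. Second, $f$ is not of finite order, because $\Mod(S_{0,4})$ is torsion-free; this holds for the mapping class group of any compact surface with nonempty boundary fixed pointwise, as such a group acts freely on Teichmüller space. Hence $f$ has infinite order.

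Third, to rule out reducibility I would invoke the Atalan--Korkmaz lemma just stated: a reducible element of $\Mod(S_{0,4})$ must be a conjugate of $T_a^n$, $T_b^n$, or $(T_aT_b)^n$ for some $n \neq 0$. To derive a contradiction, I would pass to $\PMod(S_{0,0,4})$, the pure mapping class group of the $4$-punctured sphere, via the capping homomorphism $\kappa : \Mod(S_{0,4}) \to \PMod(S_{0,0,4})$ that fills each boundary component of $S_{0,4}$ with a once-punctured disk. A classical fact (see, e.g., \cite{primer}) is that $\PMod(S_{0,0,4}) \cong F_2$, with $\kappa(T_a)$ and $\kappa(T_b)$ serving as a free generating pair. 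In the abelianization $F_2^{\mathrm{ab}} = \Z^2$, the image of the commutator $\kappa(f) = [\kappa(T_a), \kappa(T_b)]$ is $0$, while the images of $\kappa(T_a)^n$, $\kappa(T_b)^n$, and $(\kappa(T_a)\kappa(T_b))^n$ are $(n,0)$, $(0,n)$, and $(n,n)$, respectively, all nonzero for $n \neq 0$. Since conjugation preserves the image in an abelianization, $\kappa(f)$ is not conjugate in $F_2$ to any of these powers, and hence $f$ is not conjugate in $\Mod(S_{0,4})$ to the corresponding elements.

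The main obstacle is pinning down the identification $\PMod(S_{0,0,4}) \cong F_2$ with $\kappa(T_a), \kappa(T_b)$ as free generators; this is a standard but nontrivial fact about low-complexity mapping class groups, best imported as a black box rather than rederived. With all three cases of the trichotomy excluded, $f$ must be pseudo-Anosov on the lantern $S_{0,4}$.
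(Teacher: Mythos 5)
Your argument is correct, and its skeleton matches the paper's: both reduce the problem to the Atalan--Korkmaz classification of reducible elements of $\Mod(S_{0,4})$ and then show that the commutator is not conjugate to a nonzero power of $T_a$, $T_b$, or $T_aT_b$. Where you diverge is in how that conjugacy statement is ruled out. The paper notes that $T_aT_bT_a^{-1}T_b^{-1}$ is a cyclically reduced word of length $4$ in the free group $\langle T_a, T_b\rangle$ (freeness being the standard ping-pong theorem for twists about curves with $i(a,b)\geq 2$), and since conjugacy classes in a free group are cyclic words, no conjugate of it can be a power of $T_a$, $T_b$, or $T_aT_b$; this simultaneously disposes of the finite-order case, since a nontrivial element of a free group has infinite order. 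You instead cap the boundary to land in $\PMod(S_{0,0,4})\cong F_2$ and use the abelianization $\Z^2$ as a conjugacy invariant: the commutator dies there while the three families of powers do not. Your route trades the freeness of $\langle T_a,T_b\rangle$ inside $\Mod(S_{0,4})$ for the identification $\PMod(S_{0,0,4})\cong F_2$ --- and in fact needs even less, namely any homomorphism to an abelian group sending $T_a$ and $T_b$ to independent infinite-order elements --- while making explicit the nontriviality and torsion-freeness points that the paper leaves implicit. Both arguments are valid; the paper's is shorter once freeness is granted, and yours is more careful about exactly which black boxes are invoked.
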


\begin{proof}
It is clear $T_aT_bT_a^{-1}T_b^{-1}$ is a  cyclicly reduced word in the free group generated by $T_a$ and $T_b$.  Thus $f$ is not conjugate to a power of $T_a$, $T_b$ or $T_aT_b$ and must be pseudo-Anosov.
\end{proof}

Further, we consider how many SIP-maps are supported on a given lantern.

\begin{prop}
Consider the curves $x, y,$ and $z$ as in Figure~\ref{figure:lantern}.  Then the SIP-maps, $[T_x,T_y], [T_y,T_z],$ and $[T_x,T_z]$, are all distinct, as well as their inverses.
\end{prop}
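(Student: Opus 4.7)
The strategy is to compute each of the six commutators as a reduced word in a free subgroup of the pure mapping class group of the four-punctured sphere, where distinctness is immediate.

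I would begin by reducing to the quotient $\PMod(S_{0,0,4})$. Capping each boundary component of the lantern $L \cong S_{0,4}$ with a once-punctured disk induces a surjection $\PMod(L) \to \PMod(S_{0,0,4})$ whose kernel is the central subgroup generated by the boundary Dehn twists $T_a, T_b, T_c, T_d$. The kernel of the natural map $\PMod(L) \to \Mod(S)$ is contained in this central subgroup (it consists only of boundary-twist relations imposed by the embedding of $L$ in $S$), so it suffices to show the six commutators are pairwise distinct in $\PMod(S_{0,0,4})$. In this quotient the lantern relation becomes $T_x T_y T_z = 1$, hence $T_z = T_y^{-1} T_x^{-1}$.

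Next, I would invoke the classical identification $\PMod(S_{0,0,4}) \cong F_2$, which follows from the Birman exact sequence combined with $\pi_1(S_{0,0,3}) \cong F_2$ and $\PMod(S_{0,0,3}) = 1$. By Corollary~\ref{cor:pseudoanosov}, the element $[T_x, T_y]$ is pseudo-Anosov on $L$ and in particular nontrivial, so $T_x$ and $T_y$ do not commute in $\PMod(S_{0,0,4})$. Since a noncommuting pair of elements in a free group always generates a rank-two free subgroup (subgroups of free groups are free, and rank-one subgroups are abelian), $\langle T_x, T_y \rangle$ is free on $T_x$ and $T_y$.

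Substituting $T_z = T_y^{-1} T_x^{-1}$, the six commutators become six reduced words of length four in $\langle T_x, T_y \rangle$:
\begin{align*}
[T_x,T_y] &= T_x T_y T_x^{-1} T_y^{-1}, & [T_x,T_y]^{-1} &= T_y T_x T_y^{-1} T_x^{-1},\\
[T_y,T_z] &= T_x^{-1} T_y^{-1} T_x T_y, & [T_y,T_z]^{-1} &= T_y^{-1} T_x^{-1} T_y T_x,\\
[T_x,T_z] &= T_x T_y^{-1} T_x^{-1} T_y, & [T_x,T_z]^{-1} &= T_y^{-1} T_x T_y T_x^{-1}.
\end{align*}
Inspection of the first two letters distinguishes all six words, so they are pairwise distinct as reduced words in the free group $\langle T_x, T_y \rangle$, and therefore the six SIP-maps are pairwise distinct in $\Mod(S)$. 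The principal subtlety is the reduction step: identifying $\langle T_x, T_y \rangle$ as a rank-two free group, which combines the classical structure theorem for $\PMod(S_{0,0,4})$ with the noncommutativity supplied by Corollary~\ref{cor:pseudoanosov}.
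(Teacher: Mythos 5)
Your proof is correct, and it takes a genuinely different route from the paper's. The paper works directly in $\Mod(S)$: to rule out $[T_x,T_y]=[T_y,T_z]$ it substitutes the lantern relation $T_xT_yT_z=T_aT_bT_cT_d$, cancels, reduces the putative equality to $T_x(z)=T_y^{-1}(z)$, and checks that these two curves are not isotopic; the remaining comparisons are dispatched by ``similar arguments.'' You instead cap off the lantern and work in $\PMod(S_{0,0,4})\cong F_2$, where the lantern relation degenerates to $T_xT_yT_z=1$ and all six commutators become length-four reduced words in the basis $\{T_x,T_y\}$, distinguishable by their first two letters. Your reduction is sound: the kernel of $\PMod(L)\to\Mod(S)$ lies in the central boundary-twist subgroup (the standard theorem on inclusions of subsurfaces), which is exactly the kernel of capping, so distinctness in the quotient does imply distinctness in $\Mod(S)$. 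Two steps deserve one more line each. First, nontriviality of $[T_x,T_y]$ in $\Mod(L)$ is a priori weaker than nontriviality after capping; to get noncommutativity of $T_x$ and $T_y$ downstairs you should add that a product of boundary twists fixes every isotopy class of curves in $L$ and so is never pseudo-Anosov --- or bypass Corollary~\ref{cor:pseudoanosov} altogether, since the Birman exact sequence already identifies $T_x$ and $T_y$ with two of the three standard free generators of $\pi_1(S_{0,0,3})$, which visibly do not commute. Second, the promotion from ``free of rank two'' to ``free on the pair $\{T_x,T_y\}$'' uses Hopficity of $F_2$. What your approach buys: all fifteen pairwise comparisons are handled uniformly (the paper verifies one explicitly), each SIP-map is shown to be nontrivial, and arbitrary words in the three lantern SIP-maps could be distinguished the same way. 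What it costs is the appeal to the structure of $\PMod(S_{0,0,4})$ and the subsurface-inclusion theorem, where the paper needs only the twist relations recorded in Section~\ref{section:background}.
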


\begin{proof}
Consider the lantern in Figure~\ref{figure:lantern} with boundary components $a,b,c,$ and $d$ and the lantern relation: $T_xT_yT_z = T_aT_bT_cT_d.$  We consider the SIP-maps $[T_x, T_y]$ and $[T_y, T_z]$.  Suppose $[T_x, T_y] = [T_y, T_z]$.  Using the lantern relation and Lemmas~\ref{lem:conjugationrelation} and \ref{lem:isotopyrelation} we have:
\begin{eqnarray}
[T_x,T_y] & = & [T_y,T_z] \nonumber\\
\iff T_xT_yT^{-1}_xT^{-1}_y & = & T_yT_zT^{-1}_yT^{-1}_z \nonumber\\
\iff T_aT_bT_cT_dT^{-1}_zT^{-1}_xT^{-1}_y & = &  T_aT_bT_cT_dT^{-1}_xT^{-1}_yT^{-1}_z\nonumber\\
\iff T_xT^{-1}_zT^{-1}_x & = & T^{-1}_yT^{-1}_zT_y \nonumber\\
\iff T^{-1}_{T_x(z)} & = & T^{-1}_{T^{-1}_y(z)} \nonumber\\
\iff T_x(z) & = & T^{-1}_y(z)\nonumber
\end{eqnarray}
A simple calculation shows these are not the same curve. Thus $[T_x, T_y] \neq [T_y, T_z]$.  Similar arguments show that the remaining SIP-maps are also distinct.
\end{proof}

Note that distinct pairs of curves can define the same SIP-map.  For example, consider the SIP-maps $[T_z, T_x]$ and $[T_{T^{-1}_y(x)}, T_y]$ where $x, y,$ and $z$ are as in Figure~\ref{figure:lantern}.  Using the lantern relation, Lemma~\ref{lem:conjugationrelation}, and the fact that Dehn twists about disjoint curves commute, we see
\begin{eqnarray}
[T_{T^{-1}_y(x)}, T_y] & = & T_{T^{-1}_y(x)} T_y T^{-1}_{T^{-1}_y(x)} T^{-1}_y \nonumber\\
& = & T^{-1}_yT_xT_yT_yT^{-1}_yT^{-1}_xT_yT^{-1}_y \nonumber\\
& = & T^{-1}_yT_xT_yT^{-1}_x \nonumber\\
& = & (T^{-1}_aT^{-1}_bT^{-1}_cT^{-1}_d T_zT_x)T_x(T_aT_bT_cT_dT^{-1}_xT^{-1}_z)T^{-1}_x\nonumber\\
& = & T_zT_xT^{-1}_zT^{-1}_x\nonumber\\
& = & [T_z,T_x] \nonumber
\end{eqnarray}
In the next section instead of looking at individual SIP-maps, we will look at the group generated by SIP-maps and compare it to well known subgroups of $\I(S)$.

\section{The $\SIP(S_g)$-group}
\label{section:sipgroup}
The goal of this section is to prove some basic results about the group generated by all SIP-maps in $\Mod(S)$, which we will denote as $\SIP(S)$.  We will do this by looking at the image of $\SIP(S)$ under well-known representations of $\I(S)$ as well as classifying which SIP-maps are in the kernel of these representatives.  Recall that we do not distinguish between a curve and its isotopy class.  Similarly, we will frequently not distinguish between a curve and its homology class.  There is an issue regarding the orientation of a curve, and we will deal with this issue when necessary.

\textbf{Johnson Homomorphism. }
Johnson defined a surjective homomorphism, $\tau : \I(S_{g,1})  \longrightarrow \wedge^3 H$ in \cite{johnsonabelian} that measures the action of $f \in \I(S_{g,1})$ on $\pi_1(S).$  Johnson showed that separating twists are in $\text{ker }\tau$.  Further he showed separating twists generate ker $\tau$. We call this subgroup the \emph{Johnson kernel}, $\K(S)$. 

\begin{thm}[Johnson, \cite{johnsonabelian} and \cite{johnson2}]
\label{thm:johnsonkernel}
The group \text{ker} $\tau$ is generated by Dehn twists about separating curves.
\end{thm}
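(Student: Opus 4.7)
The plan is to prove the two inclusions separately, with the containment $\K(S) \subseteq \ker\tau$ being routine and the reverse containment being the substantive step. Throughout I work on $S_{g,1}$ with $g \geq 3$ (the cases $g=1,2$ are trivial or handled by Mess's theorem $\I(S_2) = \K(S_2)$).

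For the easy direction, recall that $\tau$ is defined via the action of $f \in \I(S_{g,1})$ on the second nilpotent quotient $\pi_1(S_{g,1})/\Gamma_3$, where $\Gamma_k$ denotes the lower central series. If $c$ is a separating curve bounding a subsurface $\Sigma$, then $T_c$ acts trivially outside $\Sigma$, and on a loop crossing $c$ the induced action is conjugation by an element of $[\pi_1(\Sigma),\pi_1(\Sigma)]$. Since this element already lies in $\Gamma_2$, the resulting conjugation is trivial in $\pi_1/\Gamma_3$, giving $\tau(T_c) = 0$. Because the Torelli group acts on the set of separating curves and $\tau$ is a homomorphism, this suffices for all separating twists.

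For the hard direction, I would invoke Johnson's generation theorem: for $g \geq 3$, $\I(S_{g,1})$ is generated by BP-maps. I would then compute $\tau$ on a BP-map $T_cT_d^{-1}$ whose cobounding subsurface has genus $h$: if $x = [c] = [d]$ and $\{a_i,b_i\}_{i=1}^{h}$ is a symplectic basis of that genus-$h$ piece, then
\[
\tau(T_cT_d^{-1}) \;=\; x \wedge \Bigl( \sum_{i=1}^{h} a_i \wedge b_i \Bigr).
\]
As $x$ and the symplectic subsummand vary, these elements span $\wedge^3 H$, so $\tau$ is surjective. Thus, on the level of abelian groups, $\tau$ identifies the BP-map generating set of $\I(S_{g,1})/\K(S)$ with a spanning set of $\wedge^3 H$; the kernel of $\tau$ modulo $\K(S)$ is then precisely the kernel of this induced map.

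The main obstacle, and the heart of Johnson's argument in \cite{johnson2}, is establishing that this induced map $\I(S_{g,1})/\K(S) \to \wedge^3 H$ is an isomorphism rather than merely a surjection. Equivalently, every relation among BP-maps whose image under $\tau$ is zero must already follow from relations that either commute disjoint BP-maps or express a BP-map times a BP-map as a product involving a separating twist (for instance via the lantern relation, which converts $T_aT_bT_cT_d$ into $T_xT_yT_z$ and, after regrouping, relates BP-maps to separating twists). To push this through, I would use Johnson's explicit rewriting algorithm: given a word in BP-maps with trivial $\tau$-image, one systematically applies lantern-type and commutation relations to reduce the word to a product of separating twists. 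Verifying that these relations suffice — that is, that no ``hidden'' relations exist modulo $\K(S)$ — is precisely the combinatorial-geometric bottleneck, and is what requires the bulk of the technical machinery in the original proof.
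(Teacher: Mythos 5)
This statement is quoted in the paper as background from Johnson's work (\cite{johnsonabelian}, \cite{johnson2}); the paper itself gives no proof, so the only question is whether your sketch actually establishes the result. It does not. Your easy direction ($\K(S)\subseteq \ker\tau$) is fine in outline: a separating curve is null-homologous, lies in $\Gamma_2$, and so twisting about it perturbs $\pi_1/\Gamma_3$ trivially; equivalently, Johnson's formula $\tau(T_c)=\sum a_i\wedge b_i\wedge[c]$ vanishes when $[c]=0$. But the entire content of the theorem is the reverse inclusion $\ker\tau\subseteq\K(S)$, and there you explicitly defer to ``Johnson's explicit rewriting algorithm'' and acknowledge that verifying it is ``the combinatorial-geometric bottleneck.'' Naming the bottleneck is not the same as crossing it; as written, the hard direction is asserted, not proved.

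Moreover, the reduction you propose is not one that can be completed in the form you describe. Knowing that BP-maps generate $\I(S_{g,1})$ for $g\geq 3$ and that $\tau$ is surjective only tells you that the induced map $\I(S_{g,1})/\K(S)\to\wedge^3 H$ is a surjection of groups; to show it is injective by ``checking that all relations among BP-maps with trivial $\tau$-image follow from lantern-type and commutation relations,'' you would need a presentation of $\I(S_{g,1})$ in terms of BP-maps, which you do not have (indeed, whether $\I(S_g)$ is even finitely presentable is an open problem highlighted in this very paper, and the known infinite presentation of Putman postdates Johnson's theorem and is not what you invoke). Johnson's actual proof in \cite{johnson2} is a long direct argument and is not a rewriting procedure applied to a known set of relations. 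So the gap is twofold: the key step is missing, and the strategy proposed for filling it would stall immediately for lack of a presentation to rewrite against.
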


In addition, Johnson showed how to calculate the image of a BP-map under $\tau$ by first choosing a symplectic basis $\{a_1, \dots, a_k, b_1, \dots, b_k \}$ for the homology of the subsurface bounded by $c$ and $d$.  With the chosen basis he showed
$$ \tau(T_cT_d^{-1})=\sum_{i=1}^k (a_i \wedge b_i) \wedge c$$
Note that the orientation of $c$ is chosen so that the subsurface not containing the boundary component is on the left.  Johnson also showed that the image is independent of the choice of symplectic basis. For our purposes we will take this as the definition of $\tau$, since BP-maps generate $\I(S_{g,1})$ when $g \geq 3$. We will usually use the standard symplectic basis for $H$ shown in Figure~\ref{figure:homologybasis}.

\begin{figure}[h]
\centerline{\includegraphics[scale=.7]{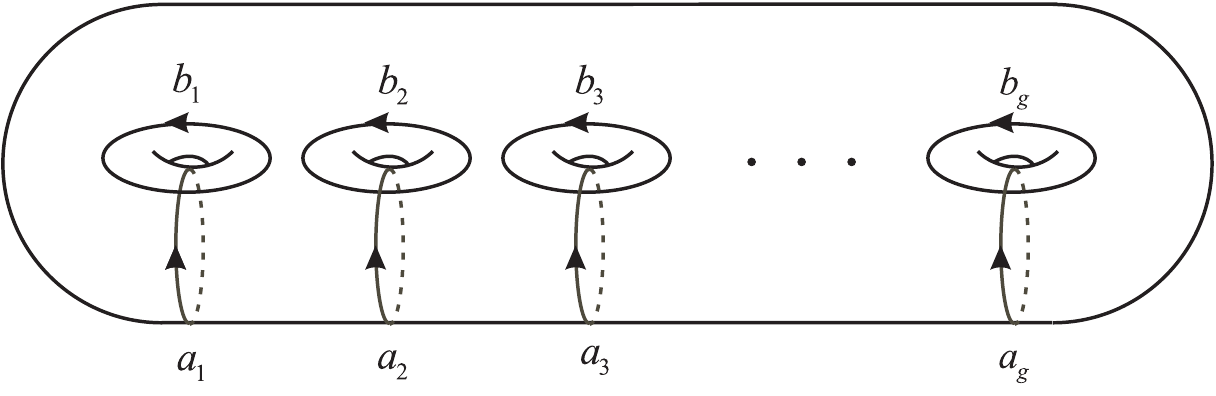}}
\caption{A collection of oriented curves that form a symplectic homology basis for $H_1(S,\Z)$.}
\label{figure:homologybasis}
\end{figure}

It is natural to ask what the image of an SIP-map is under $\tau$.  One way to calculate this is by factoring the SIP-map into BP-maps.  Consider the SIP-map $[T_a,T_b]$ as shown in Figure~\ref{figure:factorsip1}. 

\begin{figure}[htb]
\centerline{\includegraphics[scale=.6]{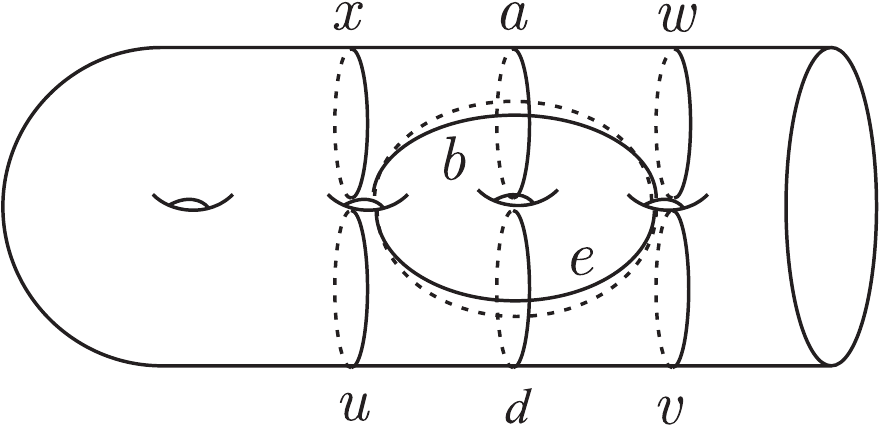} \hspace{2cm}
\includegraphics[scale=.6]{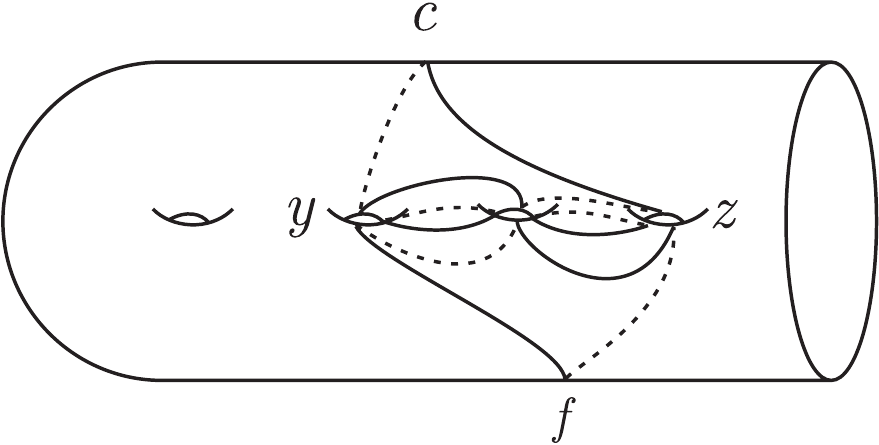}
}
\caption{A collection of curves needed to rewrite the SIP-map, $[T_a, T_b]$ in terms of BP-maps.}
\label{figure:factorsip1}
\end{figure}

\begin{lem}
\label{lem:factorsip}
Let the commutator $[T_a,T_b]$ be an SIP-map as shown in Figure~\ref{figure:factorsip1}.  Then $[T_a,T_b]$ can be rewritten as shown. $$[T_a,T_b] = (T_xT_u^{-1})(T_wT_v^{-1})(T_fT_c^{-1})(T_dT_a^{-1})(T_eT_b^{-1})$$
\end{lem}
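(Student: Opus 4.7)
The plan is to verify the identity directly as an equation in $\Mod(S)$, using the lantern relation together with the conjugation and isotopy relations recalled in Section~\ref{section:background}. The proof splits into two parts: first, confirming that each of the five factors on the right is genuinely a BP-map; and second, checking that their product equals $T_aT_bT_a^{-1}T_b^{-1}$.

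For the first part, I would read off from Figure~\ref{figure:factorsip1} that in each of the pairs $(x,u)$, $(w,v)$, $(f,c)$, $(d,a)$, $(e,b)$ the two curves are disjoint, non-separating, and cobound an embedded subsurface of $S$; hence they are homologous and each factor is indeed a BP-map in the sense of Section~\ref{section:background}. For the second part, the key observation is that the hypotheses $i(a,b)=2$ and $\hat i(a,b)=0$ force a regular neighborhood of $a\cup b$ to be a four-holed sphere, and that one can locate additional embedded lanterns in $S$ whose boundary and interior curves are drawn from $\{a,b,c,d,e,f,u,v,w,x\}$. Applying the lantern relation $T_\alpha T_\beta T_\gamma=T_{\partial_1}T_{\partial_2}T_{\partial_3}T_{\partial_4}$ on each such lantern allows one to substitute products of Dehn twists inside the right-hand side; then repeated use of Lemma~\ref{lem:conjugationrelation}, together with the commutation of Dehn twists about disjoint curves, should collapse the product of five BP-maps to $T_aT_bT_a^{-1}T_b^{-1}$.

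The main obstacle is bookkeeping: choosing the correct cyclic orderings of boundary components in each lantern relation and tracking how the various twists commute past one another so that the cancellations occur in the right order. A cleaner way to organize the calculation is first to apply Lemma~\ref{lem:conjugationrelation} to rewrite $[T_a,T_b]=T_aT_{T_b(a)}^{-1}$. The curves $a$ and $T_b(a)$ are homologous but meet in the two points of $a\cap b$, so $T_aT_{T_b(a)}^{-1}$ is a ``generalized bounding pair map''; splitting such a product into honest BP-maps is a standard application of the lantern relation in a neighborhood of those intersection points, and this should produce exactly the five BP-maps in the statement once their factors are matched with the curves labelled in Figure~\ref{figure:factorsip1}.
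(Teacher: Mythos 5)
Your general strategy --- apply the lantern relation to embedded lanterns in the configuration and then rearrange using disjointness and Lemma~\ref{lem:conjugationrelation} --- is the same one the paper uses, but the proposal stops short of the actual content of the lemma, which is precisely the ``bookkeeping'' you defer. The paper's proof rests on two specific lantern relations read off from Figure~\ref{figure:factorsip1}: a top lantern giving $T_aT_bT_c = T_xT_yT_zT_w$ and a bottom lantern giving $T_fT_dT_e = T_yT_zT_vT_u$, where the two lanterns share the boundary curves $y$ and $z$. One substitutes $T_aT_b = T_xT_yT_zT_wT_c^{-1}$ into the commutator, then replaces $T_yT_z$ by $T_v^{-1}T_u^{-1}T_fT_dT_e$ using the second lantern, and finally commutes disjoint twists to group the ten remaining letters into the five BP-maps. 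Your proposal never identifies these two lanterns, and in fact your list of curves $\{a,b,c,d,e,f,u,v,w,x\}$ omits $y$ and $z$ entirely --- yet those two curves are the hinge of the whole argument, since $T_yT_z$ is exactly the subword that gets exchanged between the two lantern relations. Without naming the lanterns and exhibiting the substitution, there is no proof.

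The ``cleaner'' reorganization you suggest at the end has a more serious problem. Writing $[T_a,T_b]=T_aT_{T_b(a)}^{-1}$ is fine, but the claim that splitting this ``generalized bounding pair map'' into honest BP-maps is a standard application of the lantern relation \emph{in a neighborhood of the intersection points} is not correct as stated. The auxiliary curves $u$, $v$, $d$, $e$, $f$ do not live near $a\cap b$; they lie in a second lantern on the far side of $y$ and $z$, and their existence as non-separating curves cobounding with $c$, $a$, $b$, $w$, $x$ depends on the topology of $S$ outside the regular neighborhood of $a\cup b$ (this is visible in the homology classes used later in Proposition~\ref{prop:johnsonsip}, which involve four handles). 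A purely local construction near the intersection points cannot produce this factorization, so this alternative route, as described, would not go through.
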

\begin{proof}
We will need to use the lantern relation twice to rewrite this SIP-map in terms of BP-maps.\\ \\

\textbf{From the Top Lantern: } $T_aT_bT_c = T_xT_yT_zT_w$\\
\textbf{From the Bottom Lantern: } $T_fT_dT_e=T_yT_zT_vT_u$\\ \\
Then using the above facts and disjointness, we see
\begin{eqnarray}
[T_a,T_b] & = & (T_aT_b)T_a^{-1}T_b^{-1} \nonumber\\
& = & T_x(T_yT_z)T_wT_c^{-1}T_a^{-1}T_b^{-1} \nonumber\\
& = & T_xT_v^{-1}T_u^{-1}T_fT_dT_eT_wT_c^{-1}T_a^{-1}T_b^{-1} \nonumber\\
& = & (T_xT_u^{-1})(T_wT_v^{-1})(T_fT_c^{-1})(T_dT_a^{-1})(T_eT_b^{-1}) \nonumber
\end{eqnarray}  
\end{proof}
Now we are ready to compute the image of an SIP-map under $\tau$. In \cite{johnsonabelian} Johnson gave topological formulas for the image under $\tau$ of BP-maps and separating twists.  Similarly we will give a topological formula for the image of an SIP-map.  We will rely on a common principle used in the study of mapping class groups called the \emph{change of coordinates principle}.  The idea is that to prove a topological statement about a certain configuration of curves, if suffices to show the result on our ``favorite" example of curves satisfying the condition.  For example to show a result about a non-separating curve, up to homeomorphism, it suffices to show the result for any non-separating curve.  See Section 1.3 of \cite{primer} for further details.  We will make use of this principle in proving many of our main results.
\begin{prop}
\label{prop:johnsonsip}
The image under $\tau$ of an SIP-map whose associated lantern has boundary components $w, x, y,$ and $z$ is given by $\tau(f) = \pm [x] \wedge [y] \wedge [z]$.
\end{prop}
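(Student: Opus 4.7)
The plan is to combine Lemma~\ref{lem:factorsip} with the Johnson formula for BP-maps and invoke the change of coordinates principle so that the computation only needs to be done on a single convenient model.

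First I would apply the change of coordinates principle: up to the action of $\Mod(S_{g,1})$, an SIP-map is determined by its associated lantern together with the homology classes of the boundary components of that lantern. Since both sides of the claimed equality depend only on these homology classes, it suffices to compute $\tau(f)$ for a concrete, convenient embedding of the lantern in $S_{g,1}$ (for instance one where $w,x,y,z$ can be written as simple words in the standard symplectic basis shown in Figure~\ref{figure:homologybasis}), and then to read off the answer as a universal formula in $[w],[x],[y],[z]$.

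The core of the proof is the factorization
$$[T_a,T_b] = (T_xT_u^{-1})(T_wT_v^{-1})(T_fT_c^{-1})(T_dT_a^{-1})(T_eT_b^{-1})$$
given by Lemma~\ref{lem:factorsip}. Since $\tau$ is a homomorphism,
$$\tau([T_a,T_b]) = \tau(T_xT_u^{-1}) + \tau(T_wT_v^{-1}) + \tau(T_fT_c^{-1}) + \tau(T_dT_a^{-1}) + \tau(T_eT_b^{-1}).$$
For each of the five bounding pairs I would choose a symplectic basis $\{a_i,b_i\}$ for the homology of the subsurface it cobounds and apply Johnson's formula $\tau(T_cT_d^{-1})=\sum_i(a_i\wedge b_i)\wedge c$. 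Because each of the curves $x,u,w,v,f,c,d,a,e,b$ appearing in the factorization lies on the boundary of the two lanterns from Figure~\ref{figure:factorsip1}, their homology classes are linear combinations of $[w],[x],[y],[z]$, so every wedge summand lies in the subspace of $\wedge^3 H$ generated by these four classes. The combinatorics of the two lanterns forces the subsurfaces on opposite sides of the picture to share most of their symplectic basis, so the four auxiliary wedge terms should pair up and cancel, leaving exactly one contribution $\pm [x]\wedge[y]\wedge[z]$.

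The main obstacle is the careful bookkeeping of this cancellation: one has to keep track of orientations (a BP-map $T_cT_d^{-1}$ requires orienting $c$ so that the genus-$k$ subsurface lies on a specific side), of the genera of the five BP-maps, and of the symplectic bases chosen on each subsurface so that the shared pieces really do cancel. I would organize the calculation by picking one common basis on the genus-$g$ region outside the full lantern and extending it compatibly to each of the five subsurfaces, which should make the cross-terms manifestly agree up to sign. After verifying the cancellation in the concrete model, the change of coordinates argument from the first paragraph promotes the identity to an arbitrary SIP-map, and the $\pm$ sign absorbs the ambiguity coming from the choice of orientations for $x$, $y$, and $z$.
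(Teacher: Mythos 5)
Your proposal follows essentially the same route as the paper: invoke the change of coordinates principle, apply the factorization of Lemma~\ref{lem:factorsip} into five BP-maps, and sum Johnson's formula $\tau(T_cT_d^{-1})=\sum_i(a_i\wedge b_i)\wedge c$ over the factors. The paper simply carries out the explicit computation you defer, writing each of the ten curves in the standard symplectic basis of Figure~\ref{figure:homologybasis} and verifying directly that everything cancels except $-a_2\wedge a_3\wedge a_4 = \pm[x]\wedge[y]\wedge[z]$, so your plan is sound and needs only that bookkeeping to be executed.
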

\begin{proof}
Let $f = [T_a, T_b]$. Then by the change of coordinates principle, showing the result for $f$ will suffice to prove the general result.
\begin{eqnarray}
\tau([T_a,T_b]) & = & \tau((T_xT_u^{-1})(T_wT_v^{-1})(T_fT_c^{-1})(T_dT_a^{-1})(T_eT_b^{-1})) \nonumber \\
& = & \tau(T_xT_u^{-1}) + \tau(T_wT_v^{-1}) + \tau(T_fT_c^{-1}) + \tau(T_dT_a^{-1}) + \tau(T_eT_b^{-1}) \nonumber\\
& = &  (a_1 \wedge b_1) \wedge [x] +  (a_1 \wedge b_1 + a_2 \wedge b_2 + a_3 \wedge b_3) \wedge [w]  + \nonumber\\
& & (a_1 \wedge b_1 + a_2 \wedge (b_2 - a_3 + b_3)) \wedge [f] +  (a_1 \wedge b_1 + a_2 \wedge b_2) \wedge [d] +  \nonumber\\
& & ( (-a_2 + a_3) \wedge b_3) \wedge [e] \nonumber\\
& = &  (a_1 \wedge b_1) \wedge (-a_2) +  (a_1 \wedge b_1 + a_2 \wedge b_2 + a_3 \wedge b_3) \wedge (-a_4)  + \nonumber\\
& & (a_1 \wedge b_1 + a_2 \wedge (b_2 - a_3 + b_3)) \wedge (a_2 - a_3 + a_4) + \nonumber\\ 
& & (a_1 \wedge b_1 + a_2 \wedge b_2) \wedge (a_3) +  ( (-a_2 + a_3) \wedge b_3) \wedge (-a_2 + a_4) \nonumber\\
& = & - a_2 \wedge a_3 \wedge a_4 \nonumber\\
& = & \pm [x] \wedge [y] \wedge [z] \nonumber
\end{eqnarray}
Observe that every SIP-map is naturally embedded in a lantern with boundary components $w, x, y,$ and $z$, hence we see the image of an SIP-map is $\tau([T_a, T_b]) = \pm x \wedge y \wedge z$ where the orientations of $w$, $x$, $y$ and $z$ are so that the lantern is on the left.  The sign is dependent on the ordering of the boundary components with respect to $a$ and $b$.  
\end{proof}

Recently Putman \cite{putmanjker} and independently Church \cite{church} also calculated the image of a SIP-map under $\tau$ directly, that is without using the above factorization.

\begin{thm} 
\label{thm:sipproper}
The subgroup $\SIP(S_{g,1})$, is a proper subgroup of $\I(S_{g,1})$.
\end{thm}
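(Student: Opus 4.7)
The plan is to exhibit a homomorphism on $\I(S_{g,1})$ that kills every SIP-map but is non-trivial on some other Torelli element. The natural candidate, hinted at in the introduction, is the composition of the Johnson homomorphism with the contraction map
\begin{equation*}
C \colon \wedge^3 H \longrightarrow H, \qquad C(u \wedge v \wedge w) \;=\; \hat i(u,v)\, w \,-\, \hat i(u,w)\, v \,+\, \hat i(v,w)\, u,
\end{equation*}
which is well-defined since the right-hand side is alternating in $u,v,w$.

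First I would use Proposition~\ref{prop:johnsonsip} to show that $C \circ \tau$ vanishes on every SIP-map. Indeed, if $f$ is an SIP-map with associated lantern $L$ having boundary components $w,x,y,z$, then $\tau(f) = \pm [x] \wedge [y] \wedge [z]$. The curves $x, y, z$ are three boundary components of the same embedded four-holed sphere $L$, so they are pairwise disjoint and therefore $\hat i(x,y) = \hat i(x,z) = \hat i(y,z) = 0$. Applying $C$ gives $0$, so $\SIP(S_{g,1}) \subseteq \ker(C \circ \tau)$.

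Next I would verify that $C \circ \tau$ is not the zero map on $\I(S_{g,1})$. For this I would evaluate it on a single genus-$1$ BP-map $T_c T_d^{-1}$, where $c \cup d$ bounds a genus-$1$ subsurface with symplectic basis $\{a_1, b_1\}$ in its interior. By Johnson's formula, $\tau(T_c T_d^{-1}) = (a_1 \wedge b_1) \wedge [c]$. Since $c$ is disjoint from $a_1$ and $b_1$, the only surviving term under $C$ is $\hat i(a_1,b_1)\,[c] = [c]$, which is nonzero because $c$ is non-separating. Thus $C \circ \tau \not\equiv 0$ on $\I(S_{g,1})$, and in particular $T_c T_d^{-1} \notin \SIP(S_{g,1})$, proving the theorem.

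The steps are essentially routine once the contraction map is in hand; the only subtle point, and the one I would take most care with, is the orientation/sign bookkeeping needed to confirm that $C$ is well-defined on $\wedge^3 H$ and that the values of $\hat i$ on the three lantern curves are truly zero (as opposed to merely having cancelling contributions). Since the three curves are disjoint representatives of their isotopy classes, this is automatic, so no real obstacle arises. A remark at the end would note that this argument in fact shows $\SIP(S_{g,1}) \subseteq \ker(C \circ \tau)$, which is the inclusion later strengthened in Corollary~\ref{cor:infiniteindex} to give infinite index.
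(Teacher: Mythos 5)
Your proposal is correct and follows essentially the same route as the paper: both use Proposition~\ref{prop:johnsonsip} to place $\SIP(S_{g,1})$ in the kernel of the contraction map composed with $\tau$, the only difference being that you establish non-triviality of $\C \circ \tau$ by a direct computation on a genus-one BP-map, whereas the paper cites Johnson's result that $\C \circ \tau$ surjects onto $2H$. Either way the argument goes through, so there is nothing to fix.
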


\begin{proof}
To show this we will make use of the contraction map $\C$ which Johnson introduces in \cite{johnsonabelian}. The contraction map $\C : \wedge^3 H \longrightarrow H$ is defined by 
$$a \wedge b \wedge c \longmapsto 2( \ia(b,c)a + \ia(a,c)b + \ia(a,b)c).$$  
Hence using Proposition~\ref{prop:johnsonsip} it is easy to see that SIP-maps are in the kernel of $(\C \circ \tau)$ since the boundary components of a lantern are disjoint.
$$(\C \circ \tau)([T_c, T_d]) = \C(\pm w \wedge x \wedge y) = 0.$$
 Further, Johnson shows that $\C \circ \tau$ actually maps $\I(S_{g,1})$ onto $2H$.  From this, we are able to deduce that $\I(S_{g,1}) \neq \SIP(S_{g,1})$.  
 \end{proof}
The following corollaries are immediate consequences of the proof of Theorem~\ref{thm:sipproper} and Proposition~\ref{prop:johnsonsip}.
\begin{cor}
The group $\SIP(S_{g,1}) \nsubseteq \K(S_{g,1})$.
\end{cor}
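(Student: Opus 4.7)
The plan is to exhibit a single SIP-map whose image under the Johnson homomorphism is nonzero; this immediately rules out containment in $\K(S_{g,1})$, since by Theorem~\ref{thm:johnsonkernel} we have $\K(S_{g,1}) = \ker \tau$. By Proposition~\ref{prop:johnsonsip}, the image $\tau(f)$ of an SIP-map $f$ whose associated lantern has boundary components $w, x, y, z$ is $\pm [x] \wedge [y] \wedge [z] \in \wedge^3 H$. So it suffices to produce an SIP-map for which three of the four lantern boundary components have linearly independent homology classes.

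For $g \geq 3$, the SIP-map $[T_a, T_b]$ depicted in Figure~\ref{figure:factorsip1} already does the job: the calculation carried out inside the proof of Proposition~\ref{prop:johnsonsip} shows that (in the standard symplectic basis pictured in Figure~\ref{figure:homologybasis}) the relevant boundary classes are $\pm a_2, \pm a_3, \pm a_4$, so
$$\tau([T_a, T_b]) = -\, a_2 \wedge a_3 \wedge a_4 \neq 0$$
in $\wedge^3 H$. Since $[T_a, T_b] \in \SIP(S_{g,1})$ but $[T_a, T_b] \notin \ker \tau = \K(S_{g,1})$, the corollary follows.

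The only point requiring a little care is checking that the configuration of Figure~\ref{figure:factorsip1} genuinely realizes an SIP-map in $S_{g,1}$ for every $g \geq 3$, so that we are entitled to use the change of coordinates principle as in the proof of Proposition~\ref{prop:johnsonsip}. In fact the lantern associated to $[T_a, T_b]$ is supported in a regular neighborhood of $a \cup b$ (Corollary~\ref{cor:pseudoanosov}), which embeds in any $S_{g,1}$ with $g \geq 3$ in a way that places three of its boundary components on three distinct handles. With that verification in hand, no further computation is needed: the nonvanishing of $a_2 \wedge a_3 \wedge a_4$ in $\wedge^3 H$ is automatic, and the result drops out of Proposition~\ref{prop:johnsonsip} with essentially no extra work.
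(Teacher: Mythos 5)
Your argument is correct and is essentially the paper's own: the corollary is stated there as an immediate consequence of Proposition~\ref{prop:johnsonsip}, namely that a single SIP-map whose lantern has three homologically independent boundary components has nonzero image under $\tau$ and hence lies outside $\K(S_{g,1}) = \ker\tau$. Your extra care about repositioning the lantern for $g = 3$ (where the explicit computation's $a_4$ is unavailable) is a reasonable refinement via change of coordinates, but the route is the same.
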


\begin{cor}
\label{cor:infiniteindex}
The group, $\SIP(S_{g,1})$, is an infinite index subgroup of $\I(S_{g,1})$.
\end{cor}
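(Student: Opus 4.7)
The plan is to leverage the contraction map $\C : \wedge^3 H \to H$ already set up in the proof of Theorem~\ref{thm:sipproper}, which is shown there to satisfy two key properties simultaneously: first, every SIP-map lies in $\ker(\C\circ\tau)$ (because the three relevant boundary curves of the associated lantern are pairwise disjoint, so all three algebraic intersection numbers vanish); and second, by Johnson's work, the composition $\C\circ\tau : \I(S_{g,1}) \to 2H$ is surjective onto $2H$.

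From these two facts the corollary follows almost immediately: I would first note that since $\SIP(S_{g,1})$ is generated by SIP-maps, and each generator lies in $\ker(\C\circ\tau)$, we have the containment
\[
\SIP(S_{g,1}) \subseteq \ker(\C\circ\tau) \subseteq \I(S_{g,1}).
\]
Then, by the first isomorphism theorem, the index of $\ker(\C\circ\tau)$ in $\I(S_{g,1})$ equals the cardinality of the image $2H$. Since $H = H_1(S_{g,1},\Z) \cong \Z^{2g}$ for $g \geq 1$, the subgroup $2H \cong \Z^{2g}$ is infinite, so $\ker(\C\circ\tau)$ already has infinite index in $\I(S_{g,1})$.

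Finally, by the multiplicativity of indices applied to the tower $\SIP(S_{g,1}) \subseteq \ker(\C\circ\tau) \subseteq \I(S_{g,1})$, the index $[\I(S_{g,1}) : \SIP(S_{g,1})]$ is a multiple of the (already infinite) index $[\I(S_{g,1}) : \ker(\C\circ\tau)]$, hence is itself infinite. No step here is really an obstacle; the only thing one should be slightly careful about is recording the case $g \geq 1$ (for $g = 1$ the Torelli group is trivial, so the statement is vacuous anyway), and confirming that the surjectivity onto $2H$ cited from Johnson is indeed what the proof of Theorem~\ref{thm:sipproper} invokes, so that the corollary genuinely follows from material already in place rather than requiring a fresh computation.
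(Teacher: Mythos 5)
Your argument is correct and is essentially the paper's own: the paper likewise observes that $\SIP(S_{g,1})$ lies in $\ker(\C\circ\tau)$ while $\C\circ\tau$ maps $\I(S_{g,1})$ onto the infinite group $2H$, so the index is infinite. No meaningful difference in approach.
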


\textbf{SIP-maps in $\K(S)$.}
We can now characterize which SIP-maps are in $\K(S) = \textrm{ker } \tau$.

\begin{cor}
\label{cor:sipkernel}
An SIP-map $f$ is an element of $\K(S)$ if and only if the lantern associated with $f$ has a boundary component that is null-homologous or if two boundary components are homologous. 
\end{cor}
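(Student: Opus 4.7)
The plan is to apply Proposition~\ref{prop:johnsonsip}, which gives $\tau(f) = \pm [x] \wedge [y] \wedge [z]$ where $w, x, y, z$ are the four lantern boundary components of $f$. Since $L$ is planar, orienting $\partial L$ coherently yields the relation $[w] + [x] + [y] + [z] = 0$ in $H$. Substituting $[w] = -[x]-[y]-[z]$ shows that $[x] \wedge [y] \wedge [z]$ equals, up to sign, $[w] \wedge [u] \wedge [v]$ for any pair $\{u,v\} \subset \{x,y,z\}$; in particular, the vanishing of $\tau(f)$ is symmetric in all four lantern boundary classes.

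For the ``if'' direction I would expand directly. If some boundary is null-homologous, I can relabel using the symmetry above so that the vanishing class appears among the wedged factors, after which the wedge is zero. For example, if $[w] = 0$ then $[x]+[y]+[z]=0$ and $[x]\wedge[y]\wedge(-[x]-[y])$ vanishes by alternation; if $[x] = 0$ the wedge vanishes trivially. If two boundaries are homologous, say $[x] = [y]$, alternation immediately gives zero; the case $[w] = [x]$ reduces via $[w]+[x]+[y]+[z] = 0$ to $[z] = -2[x]-[y]$, and expanding $[x]\wedge[y]\wedge(-2[x]-[y])$ again gives zero.

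For the ``only if'' direction I proceed contrapositively: assuming no boundary is null-homologous and no two are homologous, I must show $\tau(f) = [x]\wedge[y]\wedge[z] \neq 0$. Under these hypotheses each of $[w], [x], [y], [z]$ is a nonzero primitive class in $H$ (a nonseparating simple closed curve has primitive homology class), and no two are proportional (two proportional primitive integer classes must be equal up to sign, which would yield a homologous pair). It then suffices to conclude that $[x], [y], [z]$ are linearly independent in the free abelian group $H$, whence their triple wedge is nonzero.

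The main obstacle is ruling out a genuine three-term dependence $\alpha[x] + \beta[y] + \gamma[z] = 0$ with all coefficients nonzero, compatible with the planarity relation and the primitivity/nonproportionality hypotheses. I plan to tackle this via the topology of the embedding, analyzing the connected components of $S \setminus L$: a component with one boundary circle forces that circle null-homologous, a component with two forces those two homologous, and a component with three forces the remaining circle null-homologous, each contradicting a standing assumption. In the remaining case where $S \setminus L$ is connected, I would invoke the Mayer--Vietoris sequence for $S = L \cup (S \setminus L)$ to show that the inclusion-induced map $H_1(L) \to H_1(S)$ is injective, so that the only linear relation in $H$ among $[w], [x], [y], [z]$ is the planarity relation $[w]+[x]+[y]+[z] = 0$ and no additional three-term dependence among $[x], [y], [z]$ is possible.
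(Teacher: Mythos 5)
Your proof is correct and follows the same route as the paper: both deduce the statement from Proposition~\ref{prop:johnsonsip} by determining exactly when $[x]\wedge[y]\wedge[z]$ vanishes in $\wedge^3 H$. The paper's proof is a single sentence that leaves the ``only if'' direction implicit, and your case analysis of the components of $S\setminus L$ (a component meeting one boundary circle forces a null-homologous boundary, one meeting two forces a homologous pair up to orientation, and Mayer--Vietoris gives injectivity of $H_1(L)\to H_1(S)$ when the complement is connected) correctly supplies the one genuinely nontrivial step, namely that primitive, pairwise non-proportional boundary classes admit no three-term linear dependence beyond the planarity relation.
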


\begin{proof}
This follows directly from the calculation given in Proposition~\ref{prop:johnsonsip} of $\tau (f) = \pm [x] \wedge [y] \wedge [z]$. 
\end{proof}

See Figure~\ref{figure:specialsip} for examples of each type of SIP-map in $\K(S)$.

\begin{figure}[htb]
\centerline{\includegraphics[scale=.5]{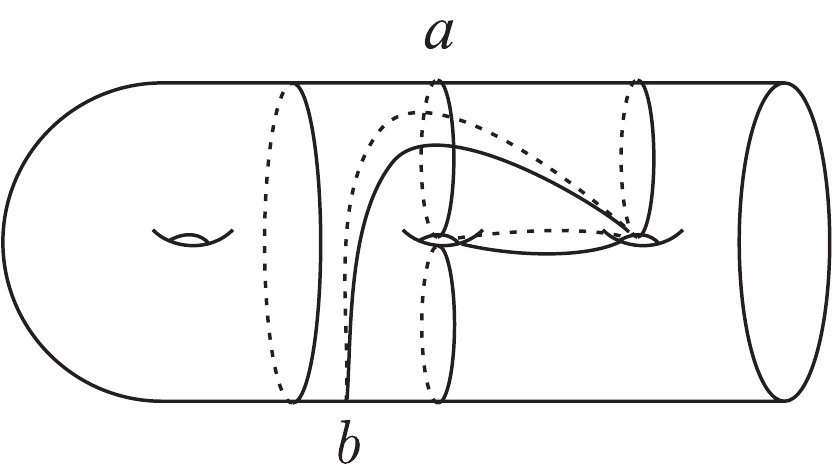}  \hspace{1 cm}   \includegraphics[scale=.5]{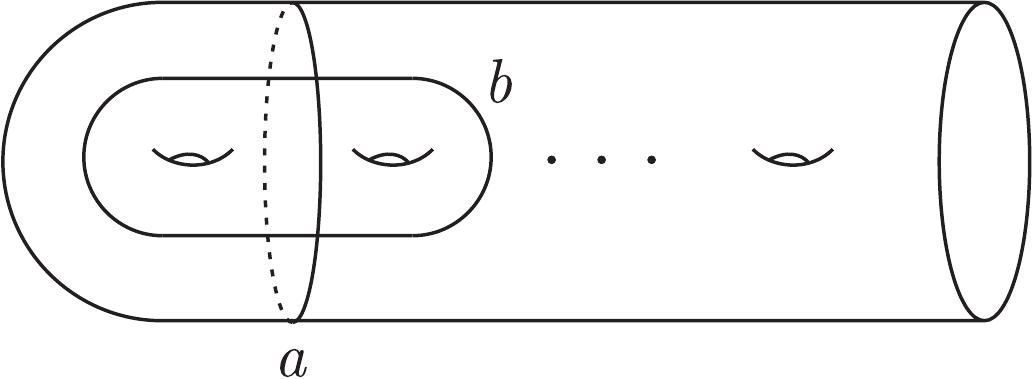}}
\caption{Examples of SIP-maps in $\K(S)$.}
\label{figure:specialsip}
\end{figure}

\textbf{The Subgroup $\langle \SIP(S_{g,1}) , \K(S_{g,1}) \rangle$. } The group $\langle \SIP(S_{g,1}) , \K(S_{g,1}) \rangle$ has appeared in the literature before this, but has never been recognized in terms of SIP-maps.  We will define basic terminology regarding \emph{winding numbers} and the \emph{Chillingworth subgroup}. Then we show how $\langle \SIP(S_{g,1}) , \K(S_{g,1}) \rangle$ can be viewed in four different ways.

More can be said about the structure of $\wedge^3H$, and it can be applied to our current situation.  According to Sakasai, $\wedge^3H$ has two irreducible components as Sp-modules (Section 2.3, \cite{sakasai}):
$$\wedge^3H = H \oplus U$$
where $U$ is the kernel of $\C$, the contraction map. It follows from irreducibility and normality that $\tau(\SIP(S_{g,1})) = U$.  From the following commutative diagram we see that $$\SIP(S_{g,1}) \diagup (\K(S_{g,1}) \cap \SIP(S_{g,1})) \cong U.$$

\begin{equation*}
  \xymatrix@C+1em@R+1em{
   \SIP(S_{g,1}) \ar[r]^-{\tau} \ar[d]_-{} & \text{ker }\C = U \\
   \SIP(S_{g,1})/(\K(S_{g,1}) \cap \SIP(S_{g,1})) \ar@{-->}[ur]_-{}
  }
 \end{equation*}

Further, ker $(\C \circ \tau) = \langle \SIP(S_{g,1}) , \K(S_{g,1}) \rangle$.  From this we can conclude 
$$\I(S_{g,1}) \diagup \langle \SIP(S_{g,1}) , \K(S_{g,1} \rangle) \cong 2H.$$
Because $2H$ is an infinite group and $\SIP(S_{g,1}) \subset \langle \SIP(S_{g,1}) , \K(S_{g,1}) \rangle$, it also follows that $\SIP(S_{g,1})$ is of infinite index in $\I(S_{g,1})$.

\textbf{Winding Number.} For a surface $S$ with continuous, non-vanishing vector field $X$ on $S$, Chillingworth defines the concept of \emph{winding number} with respect to $X$ of an oriented regular curve, $c$, to be the number of times its tangent rotates with respect to the framing induced by $X$ \cite{chillingworth1,chillingworth2}, denoted as $\omega_X(c)$.  If $f \in \I(S)$, we have the function $$e_{f,X}(c) = \omega_X(f(c)) - \omega_X(c).$$  This function measures the change in winding number induced by $f$.  Johnson showed this function is independent of the choice of vector field $X$ \cite{johnsonabelian}, hence we will write $e_f$.  Note that Johnson also showed $e_f$ is a function on homology classes.  We can then dualize the class $e_f$ to a homology class $t_f$ where $c \cdot t_f = e_f(c)$.  We call $t_f$ the \emph{Chillingworth class} of $f$.  Johnson showed that $t_f = (\C \circ \tau)(f)$.  Thus we have shown $\langle \SIP(S_{g,1}) , \K(S_{g,1}) \rangle$ is the kernel of $t$.  The kernel of $t$ is also called the \emph{Chillingworth subgroup}.

\textbf{Chillingworth Subgroup.} Trapp showed in \cite{trapp} that the Chillingworth subgroup is characterized as:

\begin{eqnarray*}
\{ f = T_{\gamma_1}T^{-1}_{\delta_1}T_{\gamma_2}T^{-1}_{\delta_2} \dots T_{\gamma_n}T^{-1}_{\delta_n} & | & T_{\gamma_i}T^{-1}_{\delta_i} \textrm{ is a genus one BP-map} \\
& & \textrm{and } \sum_{i=1}^n 2 [\gamma_i]= 0 \textrm{ in } H \}
\end{eqnarray*}

This follows from a calculation done by Trapp and Johnson that if $T_{\gamma}T^{-1}_{\delta}$ is a genus one BP-map then $t(T_{\gamma}T^{-1}_{\delta}) = 2 [\gamma]$.  Further, we can extend this presentation to include BP-maps of genus $g$ in the following way.  If the BP-map $T_{\gamma}T^{-1}_{\delta}$ has genus $g(\gamma,\delta)$ with $a_i, b_i$ as a symplectic basis for the corresponding subsurface, then 
\begin{eqnarray}
t(T_{\gamma}T^{-1}_{\delta}) & = & \C((\sum_{i=1}^{g(\gamma,\delta)}a_i \wedge b_i)\wedge \gamma) \nonumber\\
& = & \C(\sum_{i=1}^{g(\gamma,\delta)}a_i \wedge b_i \wedge \gamma)\nonumber\\
& = & \sum_{i=1}^{g(\gamma,\delta)}\C(a_i \wedge b_i \wedge \gamma)\nonumber\\
& = &\sum_{i=1}^{g(\gamma,\delta)}2[\gamma]\nonumber\\
& = & 2g(\gamma, \delta)[\gamma]\nonumber
\end{eqnarray}

So we could write the Chillingworth subgroup as:

\begin{eqnarray*}
\{ f = T_{\gamma_1}T^{-1}_{\delta_1}T_{\gamma_2}T^{-1}_{\delta_2} \dots T_{\gamma_n}T^{-1}_{\delta_n} & | & T_{\gamma_i}T^{-1}_{\delta_i} \textrm{ is a BP-map} \\
& & \textrm{and } \sum_{i=1}^n 2 g(\gamma_i,\delta_i) [\gamma_i]= 0 \textrm{ in } H \}
\end{eqnarray*}

Equivalently, we can include separating twists, $T_\gamma$, because $t(T_\gamma)=0$.  Hence the Chillingworth subgroup is:

\begin{eqnarray*}
\{ f = f_1 f_2 \dots f_n  & | & f_i = T_{\gamma_i} \textrm{ and } \gamma_i \textrm{ is a separating curve or}\\
& & f_i = T_{\gamma_i}T^{-1}_{\delta_i} \textrm{ is a  BP-map and } \sum_{i=1}^n 2 g(\gamma_i,\delta_i) [\gamma_i]= 0 \textrm{ in } H,\\
& & \textrm{ with } g(\gamma_i,\delta_i) := 0 \textrm{ if } \gamma_i \textrm{ is separating }\}
\end{eqnarray*}

Now we have the following equivalence:
\begin{cor} 
The following are equivalent definitions of the group $\langle \SIP(S_{g,1}) , \K(S_{g,1}) \rangle$.
\begin{enumerate}
\item The group $\langle \SIP(S_{g,1}) , \K(S_{g,1}) \rangle$ is the group generated by all separating twists and SIP-maps.\\
\item The group $\langle \SIP(S_{g,1}) , \K(S_{g,1}) \rangle$ is the kernel of $\C \circ \tau$.\\
\item The group $\langle \SIP(S_{g,1}) , \K(S_{g,1}) \rangle$ is the group of all elements in $\I(S_{g,1})$ with winding number zero.\\
\item The group $\langle \SIP(S_{g,1}) , \K(S_{g,1}) \rangle$ is the following group:
\begin{eqnarray*}
\{ f = T_{\gamma_1}T^{-1}_{\delta_1}T_{\gamma_2}T^{-1}_{\delta_2} \dots T_{\gamma_n}T^{-1}_{\delta_n} & | & T_{\gamma_i}T^{-1}_{\delta_i} \textrm{ is a genus one BP-map} \\
& & \textrm{and } \sum_{i=1}^n 2 [\gamma_i]= 0 \textrm{ in } H \}
\end{eqnarray*}
\end{enumerate}
\end{cor}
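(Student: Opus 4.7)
The plan is to verify that each of (1)--(4) describes the same subgroup of $\I(S_{g,1})$ by assembling the facts developed earlier in the section; this corollary is essentially a synthesis. Item (1) is by construction: Theorem~\ref{thm:johnsonkernel} gives that $\K(S_{g,1})$ is generated by separating twists, and $\SIP(S_{g,1})$ is by definition generated by SIP-maps, so $\langle \SIP(S_{g,1}), \K(S_{g,1}) \rangle$ coincides with the group generated by all separating twists together with all SIP-maps.

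For (2), I would prove both containments. The forward inclusion $\langle \SIP(S_{g,1}), \K(S_{g,1}) \rangle \subseteq \ker(\C \circ \tau)$ is immediate from the computation in the proof of Theorem~\ref{thm:sipproper}, which shows that $(\C \circ \tau)$ annihilates every SIP-map, together with $\K(S_{g,1}) = \ker \tau \subseteq \ker(\C \circ \tau)$. For the reverse inclusion, I would use the fact established in the preceding discussion that $\tau(\SIP(S_{g,1})) = U = \ker \C$, from which $\tau(\langle \SIP, \K \rangle) = U$ follows; then given $f \in \ker(\C \circ \tau)$, one picks $h \in \langle \SIP, \K \rangle$ with $\tau(h) = \tau(f)$, so that $f h^{-1} \in \ker \tau = \K(S_{g,1}) \subseteq \langle \SIP, \K \rangle$, giving $f \in \langle \SIP, \K \rangle$. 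Equivalence with (3) then comes from Johnson's identity $t_f = (\C \circ \tau)(f)$ recorded in the Winding Number paragraph: the Chillingworth class $t_f$ is dual to the winding-number-change function $e_f$, so $t_f = 0$ exactly when $e_f \equiv 0$, i.e.\ when $f$ preserves winding numbers, and $\ker(\C \circ \tau)$ is precisely this subgroup. Equivalence with (4) is Trapp's presentation of the Chillingworth subgroup, already written out above, whose key ingredient is the computation $t(T_\gamma T_\delta^{-1}) = 2[\gamma]$ for a genus-one BP-map.

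The main (mild) obstacle is the reverse inclusion in (2), since it requires $\tau(\SIP(S_{g,1}))$ to be \emph{all} of $U$ rather than a proper submodule. This is handled by the $\Sp$-module decomposition $\wedge^3 H = H \oplus U$ with $U$ irreducible, together with Proposition~\ref{prop:johnsonsip}, which exhibits a nonzero $U$-valued image under $\tau$ coming from an SIP-map; normality of $\SIP(S_{g,1})$ in $\Mod(S_{g,1})$, which follows from Lemma~\ref{lem:conjugationrelation} since conjugation sends SIP-maps to SIP-maps, ensures that $\tau(\SIP(S_{g,1}))$ is an $\Sp$-invariant subspace of $U$, so irreducibility forces equality. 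With that identification in hand, the remaining equivalences are short rephrasings of earlier results.
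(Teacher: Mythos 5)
Your proposal is correct and follows essentially the same route as the paper, which presents this corollary as a synthesis of the immediately preceding discussion: the $\Sp$-module decomposition $\wedge^3 H = H \oplus U$ with $\tau(\SIP(S_{g,1})) = U$ by irreducibility and normality, the identity $t_f = (\C \circ \tau)(f)$, and Trapp's presentation of the Chillingworth subgroup. The one place you go beyond the paper is in spelling out the reverse inclusion $\ker(\C \circ \tau) \subseteq \langle \SIP(S_{g,1}), \K(S_{g,1}) \rangle$ via the coset argument $f h^{-1} \in \ker \tau$, a step the paper asserts without detail; your version is the right way to fill that in.
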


\section{Reinterpreting Relations}
\label{section:relations}
A potential application of studying SIP-maps is to find a better generating set for $\I(S)$.  While Johnson found a finite generating set for $\I(S_g)$ when $g \geq 3$ it is extremely large \cite{johnson1}.  Johnson conjectured that this generating set could be reduced to a more manageable size.  Johnson's main technique was to employ several relations he discovered among BP-maps.  We will use SIP-maps to reinterpret one of Johnson's relations in $\I(S)$.  Perhaps similar techniques could be used to rewrite the other Johnson relations.

Independently, Putman showed how to factor an SIP-map into the product of two BP-maps \cite{putmaninfinite}.  

\begin{lem}[Putman, Fact F.5, \cite{putmaninfinite}]
\label{lem:factorputman}
Let curves $a, b,$ and $e$ be as in Figure~\ref{figure:factorsip1}.  Then 
$$[T_a,T_b] = (T_{T_a(b)}T^{-1}_e)(T_e T^{-1}_b).$$
\end{lem}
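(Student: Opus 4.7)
The right-hand side telescopes: multiplying out,
$$(T_{T_a(b)} T_e^{-1})(T_e T_b^{-1}) = T_{T_a(b)}\,T_b^{-1},$$
so the entire content of the lemma is the identity $[T_a,T_b] = T_{T_a(b)} T_b^{-1}$ together with the verification that the two factors $T_{T_a(b)} T_e^{-1}$ and $T_e T_b^{-1}$ are genuinely BP-maps.

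The plan is therefore as follows. First I would dispatch the algebraic identity using the conjugation relation (Lemma~\ref{lem:conjugationrelation}): since $T_a T_b T_a^{-1} = T_{T_a(b)}$, we get
$$[T_a,T_b] \;=\; T_a T_b T_a^{-1} T_b^{-1} \;=\; T_{T_a(b)} T_b^{-1} \;=\; T_{T_a(b)} T_e^{-1} T_e T_b^{-1},$$
which is exactly the claimed factorization once we group the middle $T_e^{-1} T_e$ appropriately.

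The substantive step is to check that each grouped factor really is a BP-map, i.e.\ that the relevant pairs of curves are disjoint, non-separating, and homologous. Here I would read off from Figure~\ref{figure:factorsip1} that the auxiliary curve $e$ has been chosen precisely so that $e$ is disjoint from $b$ and from $T_a(b)$, and so that $[e] = [b]$ in $H_1(S,\mathbb{Z})$. Since $\hat\imath(a,b) = 0$ for an SIP configuration, the formula $[T_a(b)] = [b] + \hat\imath(b,a)[a]$ gives $[T_a(b)] = [b] = [e]$, so both $\{T_a(b), e\}$ and $\{e, b\}$ are pairs of disjoint, homologous, non-separating simple closed curves, which is the definition of a bounding pair. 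Thus $T_{T_a(b)} T_e^{-1}$ and $T_e T_b^{-1}$ are honest BP-maps.

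The only part that requires any care is justifying the existence and placement of $e$ from the figure; once that is read off, the algebra is immediate and the homological check is a one-line application of the action of a Dehn twist on homology. There is no serious obstacle — the apparent subtlety of Putman's factorization is really just the insertion of the telescoping pair $T_e^{-1} T_e$ that converts the single ``long'' factor $T_{T_a(b)} T_b^{-1}$ into two BP-maps of shorter genus, which is useful because it matches bounding pair generators of $\I(S_{g,1})$.
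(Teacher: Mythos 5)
Your argument is correct. The paper itself states this lemma as Putman's Fact F.5 and offers no proof of its own, so there is nothing to compare against; your reasoning --- that the identity reduces via the conjugation relation to $[T_a,T_b]=T_{T_a(b)}T_b^{-1}$, that inserting the telescoping pair $T_e^{-1}T_e$ gives the stated grouping, and that the only genuine content is verifying from the figure that $(T_a(b),e)$ and $(e,b)$ are disjoint, non-separating, homologous pairs (the homology check following from $\hat{i}(a,b)=0$) --- is exactly the intended one.
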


Combining Lemmas~\ref{lem:factorsip} and \ref{lem:factorputman} yields a new proof of the following relation in $\I(S)$ discovered by Johnson \cite{johnson1}.

\begin{figure}[htb]
\centerline{\includegraphics[scale=.4]{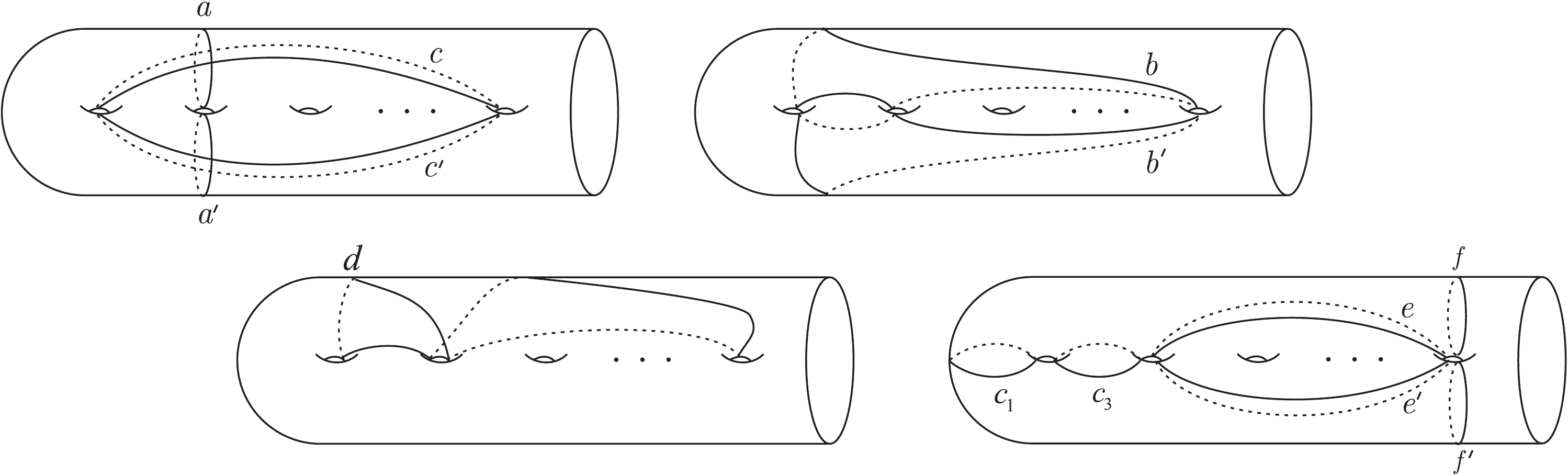}}
\caption{The curves needed for Johnson's relation.}
\label{figure:johnsonlemma10}
\end{figure}

\begin{lem}[Johnson, Lemma 10, \cite{johnson1}]
Let curves $a, a', b, b', c, c', c_1, c_3,$ $d, e, e', f,$ and $f'$ be as defined in Figure~\ref{figure:johnsonlemma10}.  Then 
$$T_a T^{-1}_{a'} T_b T^{-1}_{b'} T_d T^{-1}_{c'} = T_e T^{-1}_{e'} T_f T^{-1}_{f'}.$$  
\end{lem}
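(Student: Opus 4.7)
The plan is to exhibit a single SIP-map that admits two different factorizations into BP-maps: a five-factor one via Lemma~\ref{lem:factorsip} and a two-factor one via Lemma~\ref{lem:factorputman}. Equating these two factorizations and rearranging will produce a relation in $\I(S)$ whose left-hand side is three BP-maps and right-hand side a product of four BP-maps that collapses to two.

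First I would locate in Figure~\ref{figure:johnsonlemma10} a pair of simple closed curves $\alpha, \beta$ with $i(\alpha, \beta) = 2$ and $\hat{i}(\alpha, \beta) = 0$, so that $[T_\alpha, T_\beta]$ is the SIP-map to be decomposed. I would then check that a regular neighborhood of $\alpha \cup \beta$, together with two adjacent lanterns visible in the figure, reproduces precisely the curve configuration of Figure~\ref{figure:factorsip1}, with the outer-lantern boundary curves corresponding to Johnson's pairs $(a, a')$, $(b, b')$, and $(d, c')$. Applying Lemma~\ref{lem:factorsip} will then give
$$[T_\alpha, T_\beta] \;=\; (T_a T^{-1}_{a'})(T_b T^{-1}_{b'})(T_d T^{-1}_{c'})\, M_1 M_2,$$
where $M_1, M_2$ are the two auxiliary BP-maps coming from the inner lantern.

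Next I would apply Lemma~\ref{lem:factorputman} to the same SIP-map $[T_\alpha,T_\beta]$, choosing Putman's intermediate curve to be one of the curves $c_1, c_3$ of Johnson's figure. This writes $[T_\alpha, T_\beta]$ as a product of two BP-maps sharing a Dehn-twist curve, call them $C_1 C_2$. Setting the two factorizations equal and moving $M_1, M_2$ to the other side yields
$$T_a T^{-1}_{a'}\, T_b T^{-1}_{b'}\, T_d T^{-1}_{c'} \;=\; C_1 C_2 M_2^{-1} M_1^{-1},$$
a product of four BP-maps on the right. The remaining task is to show that this collapses to $T_e T^{-1}_{e'} T_f T^{-1}_{f'}$.

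The main obstacle is precisely this collapse. The mechanism I expect is that adjacent BP-maps on the right share a common Dehn-twist curve, so that pairs like $T_e T^{-1}_g \cdot T_g T^{-1}_{e'}$ simplify to $T_e T^{-1}_{e'}$. Executing this will require matching the auxiliary curves $c_1, c_3$ from Johnson's figure with the common curves produced by the factorsip and factorputman factorizations, and then using disjointness-commutativity together with Lemmas~\ref{lem:conjugationrelation} and~\ref{lem:isotopyrelation} to perform the simplification. By the change-of-coordinates principle, it suffices to verify this identification on one concrete model of the configuration in Figure~\ref{figure:johnsonlemma10}, after which the algebra is routine.
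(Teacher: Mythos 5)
Your proposal follows essentially the same route as the paper: the paper also takes a single SIP-map supported on the configuration of Figure~\ref{figure:johnsonlemma10}, factors it once via Lemma~\ref{lem:factorputman} as $(T_{T_a(c)}T^{-1}_{c'})(T_{c'}T^{-1}_c)$ and once via the method of Lemma~\ref{lem:factorsip} as a product of BP-maps on the curves $a, a', b, b', c, c', f, f'$, then equates the two expressions and rearranges to obtain the relation. Your bookkeeping with the auxiliary factors $M_1, M_2$ and the final ``collapse'' is just a slightly more explicit version of the same cancellation the paper performs, so the approach matches.
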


\begin{proof}
Using the factoring of Putman in Lemma~\ref{lem:factorputman} we see:
$$[T_a, T_c] = T_d T^{-1}_{c'} T_{c'} T^{-1}_c, \textrm{ where } d = T_a(c).$$
Using the techniques and factoring in Lemma~\ref{lem:factorsip} we have the following:
$$[T_a, T_c] = T_{b'} T^{-1}_{b} T_{a'} T^{-1}_{a} T_{c'} T^{-1}_{c} T_{f} T^{-1}_{f'}.$$
Combining these two equations and rearranging we have the desired result.
\begin{eqnarray}
T_d T^{-1}_{c'} T_{c'} T^{-1}_c & = &  T_{b'} T^{-1}_{b} T_{a'} T^{-1}_{a} T_{c'} T^{-1}_{c} T_{f} T^{-1}_{f'} \nonumber\\
T_a T^{-1}_{a'} T_b T^{-1}_{b'} T_d T^{-1}_{c'} & = & T_e T^{-1}_{e'} T_f T^{-1}_{f'} \nonumber
\end{eqnarray} 
\end{proof}

Further study is needed to determine whether other relations among Johnson's generators in $\I(S)$ can be realized by SIP-maps.  

\section{Birman-Craggs-Johnson Homomorphism}
\label{section:bcj}
In order to define the Birman-Craggs-Johnson homomorphism, one of the most well known representations of the Torelli group, we will need first to consider the Birman-Craggs homomorphisms.  

\textbf{Birman-Craggs Homomorphisms. } In \cite{birmancraggs} Birman and Craggs introduced a finite collection of homomorphisms from $\I(S_{g,1})$ to $\Z_2$ based on the Rochlin invariant. 
 
\textbf{Rochlin Invariant.} Let $W$ be a homology sphere and $X$ be a simply connected parallelizable 4-manifold, so $W=\partial X$.  We know such a manifold $X$ always exists, and the signature$(X)$ is divisible by 8.  Further,  $\frac{\mathrm{signature}(X)}{8}\mathrm{mod} \,  2$ is independent of $X$; hence, an invariant of $W$ called the \emph{Rochlin invariant} denoted by $\mu$.  A good reference for this material is \cite{gompf}.

The \emph{Birman-Craggs homomorphisms} is a collection of homomorphisms $$\rho_h: \I \to\Z_2$$ defined by fixing an embedding $h: S \hookrightarrow S^3$ and identifying $S$ with $h(S).$  For $f \in \I$, split $S^3$ along $S$ and reglue the two pieces using $f,$ creating a closed 3-manifold, $W(h,f)$.  Since $f$ acts trivially on $H_1(S, \Z)$, the 3-manifold $W(h,f)$ is a homology sphere.  Thus the Rochlin invariant $\mu(h,f) \in \Z_2$ is defined.  Hence for a  fixed embedding $h$,  
$$\rho_h(f)  : =  \mu(h,f)$$ 
is the Birman-Craggs homomorphism.  In addition, Johnson showed these homomorphisms correspond to the mod 2 self-linking forms associated with $S$, hence there are only finitely many \cite{johnsonbcj}.

\textbf{Birman-Craggs-Johnson Homomorphism.}  
In \cite{johnsonbcj}, Johnson combined all the Birman-Craggs homomorphisms into one homomorphism $\sigma$, in the sense that the kernel of $\sigma$ is equal to the intersection of the kernels of all the Birman-Craggs homomorphisms.  In order to describe this homomorphism, we first need to define boolean polynomials.

We construct from $H_1(S, \mathbb{Z}_2)$ a $\mathbb{Z}_2$-algebra $B$ such that:
\begin{enumerate}
\item $B$ is commutative with unity
\item $B$ is generated by the abstract elements $\bar{a}$ where $a$ is nonzero in $H_1(S, \mathbb{Z}_2),$
\item $\bar{a}^2=\bar{a}$ for all $a \neq 0$ in $H_1(S, \mathbb{Z}_2)$. (Sometimes this is referred to as a ``square-free" algebra.)
\item $\overline{(a+b)} = \bar{a} + \bar{b} + a\cdot b$ where $a \cdot b \in \mathbb{Z}_2 \subset B$ is the algebraic intersection of $a$ and $b$ modulo 2.
\end{enumerate}

Elements of $B$ are thought of as polynomials in the generators.  The \emph{degree} of an element is well defined, and we let $B_3$ equal the vector space of all elements in $B$ of degree less than or equal to 3.

Then the \emph{Birman-Craggs-Johnson homomorphism} is a surjective homomorphism
$$\sigma:  \I(S_{g,1}) \longrightarrow B_3.$$

Johnson also calculated the image of BP-maps and separating twists under $\sigma$.  Again since BP-maps generate $\I(S_g)$ when $g \geq 3$,  for our purposes we will take this as the definition of the map $\sigma$.  Thus,

\begin{enumerate}
\item A genus $k$ separating curve, $c$:

\begin{itemize}
\item Choose a symplectic basis $\{a_1, \cdots , a_k, b_1, \cdots , b_k\}$ for the subsurface bounded by $c$. 
\item $\sigma(T_c) = \sum_{i=1}^k \bar{a}_i \bar{b}_i $
\end{itemize}

\item A genus $k$ BP-map $T_cT_d^{-1}$:

\begin{itemize}
\item Choose a symplectic basis $\{a_1, \cdots , a_k, b_1, \cdots , b_k\}$ for the subsurface bounded by $c$ and $d$. 
\item $\sigma(T_cT_d^{-1}) = \sum_{i=1}^k \bar{a}_i \bar{b}_i(1-\bar{c}) $
\end{itemize}

\end{enumerate}
Johnson showed both these calculations are independent of the choice of symplectic basis.  We will find a similar formula for the image of an SIP-map under $\sigma$ that is based completely on the topological structure of the given SIP-map.

Given the rewriting of an SIP-map in terms of BP-maps in Lemma~\ref{lem:factorsip}, it is not hard to determine the image of an SIP-map under $\sigma$.

\begin{prop}
\label{prop:sipbcj}
Consider the SIP-map, $[T_a, T_b]$, as shown in Figure~\ref{figure:factorsip1}, which is naturally embedded in a lantern with boundary components $w, x, y,$ and $z$. Then $\sigma([T_a, T_b]) =  \bar{x} \bar{y} \bar{z}$.
\end{prop}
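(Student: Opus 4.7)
The plan is to mirror the strategy used in the proof of Proposition~\ref{prop:johnsonsip}. First, by the change of coordinates principle, it suffices to verify the formula for the single model SIP-map $f = [T_a, T_b]$ pictured in Figure~\ref{figure:factorsip1}; any other SIP-map whose lantern has the same homological type is related to this one by a homeomorphism of $S_{g,1}$, and $\sigma$ transforms equivariantly under the induced action on $H_1(S,\mathbb{Z}_2)$.

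Second, I apply Lemma~\ref{lem:factorsip} to write
$$f = (T_xT_u^{-1})(T_wT_v^{-1})(T_fT_c^{-1})(T_dT_a^{-1})(T_eT_b^{-1}),$$
and use the fact that $\sigma$ is a homomorphism into the abelian $\mathbb{Z}_2$-algebra $B_3$ to reduce the problem to computing $\sigma$ on each of the five BP-map factors and summing.

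Third, for each BP-map $T_\gamma T_\delta^{-1}$ of genus $k$, Johnson's formula
$$\sigma(T_\gamma T_\delta^{-1}) = \left(\sum_{i=1}^k \bar{a}_i \bar{b}_i\right)(1 - \bar{\gamma})$$
applies. Using the standard symplectic basis from Figure~\ref{figure:homologybasis}, I would recycle the genera and the homology classes of $x, w, f, d, e$ already determined in Proposition~\ref{prop:johnsonsip} (for example $[x] = -a_2$, $[w] = -a_4$, and so on), substitute them into the five expressions, expand each product, and combine. Simplifying using the Boolean relations $\bar{a}^2 = \bar{a}$ and $\overline{a+b} = \bar{a} + \bar{b} + a\cdot b$ should leave only the cubic term $\bar{x}\bar{y}\bar{z}$.

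The main obstacle is purely the bookkeeping. A large number of monomials of degrees $0, 1, 2,$ and $3$ appear when the five summands $\left(\sum \bar{a}_i\bar{b}_i\right)(1 - \bar{\gamma})$ are expanded; one must verify that every piece of degree strictly less than $3$ cancels, while the cubic contributions collapse to the single monomial $\bar{x}\bar{y}\bar{z}$. The trickiest step is the bilinear correction in $\overline{a+b} = \bar{a} + \bar{b} + a\cdot b$, since applying it to the classes $[f] = a_2 - a_3 + a_4$, $[e] = -a_2 + a_4$, etc., produces $\mathbb{Z}_2$-constants that propagate through the multiplications and must be tracked carefully. Once the cancellation is verified on the model, the change of coordinates principle upgrades it to the stated topological formula for every SIP-map with the same homological configuration.
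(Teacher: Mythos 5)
Your proposal follows essentially the same route as the paper: factor the SIP-map into five BP-maps via Lemma~\ref{lem:factorsip}, apply Johnson's formula $\sigma(T_\gamma T_\delta^{-1}) = \bigl(\sum_i \bar{a}_i\bar{b}_i\bigr)(1-\bar{\gamma})$ to each factor in the standard symplectic basis, sum, and simplify in the Boolean algebra, with the change of coordinates principle handling generality. The only caveat is that the low-degree terms do not literally all cancel; the sum reduces to an expression such as $\bar{a}_2\bar{a}_4 + \bar{a}_2\bar{a}_3\bar{a}_4$, which one then recognizes as the expansion of the product $\bar{x}\bar{y}\bar{z}$ using $\overline{a+b}=\bar{a}+\bar{b}+a\cdot b$ --- but this is exactly the bookkeeping you flag, and the argument is otherwise identical to the paper's.
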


\begin{proof}
The proof consists of the following calculation using Lemma~\ref{lem:factorsip} and the change of coordinates principle.  We will be using the standard symplectic basis show in Figure~\ref{figure:homologybasis} for this calculation.
\begin{eqnarray}
[T_a,T_b]  & =  & (T_xT_u^{-1})(T_wT_v^{-1})(T_fT_c^{-1})(T_dT_a^{-1})(T_eT_b^{-1}) \nonumber\\
& = & \sigma(T_xT_u^{-1}) + \sigma(T_wT_v^{-1}) + \sigma(T_fT_c^{-1}) + \sigma(T_dT_a^{-1}) + \sigma(T_eT_b^{-1}) \nonumber\\
& = & \bar{a}_1\bar{b}_1(1-\bar{a}_2) + (\bar{a}_1\bar{b}_1 + \bar{a}_2\bar{b}_2 + \bar{a}_3\bar{b}_3)(1-\bar{a}_4)\nonumber\\
& & (\bar{a}_1\bar{b}_1 + \bar{a}_2 (\overline{b_2-a_3+b_3}))(1 - (\overline{-a_2 + a_3 - a_4})) \nonumber\\
& & (\bar{a}_1\bar{b}_1 +\bar{a}_2\bar{b}_2)(1+\bar{a}_3) + ((\overline{-a_2+a_3})\bar{b}_3)(1-\overline{a_2 -a_4})\nonumber\\
& = & \bar{a}_2\bar{a}_4 + \bar{a}_2\bar{a}_3\bar{a}_4 \nonumber\\
& = & \bar{a}_2 (\overline{-a_2 + a_4})(\overline{-a_3 + a_4}) \nonumber\\
& = & \bar{x} \bar{y} \bar{z} \nonumber
\end{eqnarray}
Note that since $w, x, y,$ and $z$ bound a subsurface, the result is equivalent to that using any three of the four bounding curves.  For example, we consider $\bar{x} \bar{y} \bar{w}$.
\begin{eqnarray}
\bar{x} \bar{y} \bar{w} & = & \bar{x} \bar{y} (\overline{x+y+z}) \nonumber\\
& = & \bar{x}\bar{y}(\bar{x} + \bar{y} + \bar{z}) \nonumber\\
& =& \bar{x}\bar{y} +\bar{x}\bar{y}+\bar{x}\bar{y}\bar{z}\nonumber\\
& = & \bar{x}\bar{y}\bar{z} \nonumber
\end{eqnarray}
\end{proof}

\begin{cor}
\label{cor:sipbcj}
An SIP-map, $[T_a, T_b]$, with associated lantern, $L$, is an element of $\textrm{ker }\sigma$ if and only if one of the boundary components of $L$ is null-homologous.
\end{cor}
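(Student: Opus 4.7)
My plan is to derive the corollary from Proposition~\ref{prop:sipbcj} together with the lantern identity $w + x + y + z = 0$ in $H_1(S, \Z_2)$, which holds because the four boundary components cobound a subsurface. Exactly as at the end of the proof of Proposition~\ref{prop:sipbcj}, expanding $\bar{w} = \overline{x+y+z}$ via the Boolean relations (using that the four curves are pairwise disjoint, so all pairwise mod-$2$ algebraic intersections vanish) yields the symmetric identities
\[
\sigma([T_a,T_b]) \;=\; \bar{x}\bar{y}\bar{z} \;=\; \bar{x}\bar{y}\bar{w} \;=\; \bar{x}\bar{z}\bar{w} \;=\; \bar{y}\bar{z}\bar{w}.
\]
This symmetry is the key tool in what follows.

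For the $(\Leftarrow)$ direction, if one of $w, x, y, z$ is null-homologous, I pick the triple product above containing that class; the corresponding generator is $\bar{0} = 0$, so $\sigma([T_a,T_b]) = 0$.

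For the $(\Rightarrow)$ direction I would prove the contrapositive: assuming all four boundary classes are nonzero in $H_1(S, \Z_2)$, I want $\bar{x}\bar{y}\bar{z} \neq 0$ in $B_3$. The cleanest route is via \emph{quadratic refinements} of the intersection form. A quadratic refinement $q : H_1(S, \Z_2) \to \Z_2$, i.e.\ $q(u+v) = q(u) + q(v) + u\cdot v$, induces a ring homomorphism $\varphi_q : B \to \Z_2$ sending $\bar{u} \mapsto q(u)$: the relation $\overline{u+v} = \bar{u} + \bar{v} + u\cdot v$ becomes the defining identity for $q$, and $\bar{u}^2 = \bar{u}$ reduces to $q(u)^2 = q(u)$, which is automatic in $\Z_2$. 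If I can produce some $q$ with $q(x) = q(y) = q(z) = 1$, then $\varphi_q(\bar{x}\bar{y}\bar{z}) = 1$, which forces $\bar{x}\bar{y}\bar{z} \neq 0$.

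The main obstacle is constructing such a $q$, and this is where the non-null-homology hypothesis on \emph{all four} boundary components is used. Since $x, y, z$ are pairwise disjoint, $x \cdot y = x \cdot z = y \cdot z = 0$ mod $2$. The only $\Z_2$-linear dependence among $\{x,y,z\}$ that would obstruct setting $q \equiv 1$ on this set is $x + y + z = 0$, since this would force $0 = q(0) = 1 + 1 + 1 + 0 = 1$. But $x + y + z = w$ in $H_1(S, \Z_2)$, and by hypothesis $w \neq 0$, so this obstruction does not arise. Any smaller dependence (such as $x = y$) is compatible with $q \equiv 1$, again because the pairwise intersections vanish. Having defined $q$ consistently on the $\Z_2$-span of $\{x,y,z\}$, I would extend it freely to a full quadratic refinement on $H_1(S, \Z_2)$, using that the set of quadratic refinements forms an affine torsor over the $\Z_2$-linear functionals on $H_1(S, \Z_2)$.
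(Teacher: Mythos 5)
Your proof is correct, and it is in fact more complete than what the paper provides: the corollary is stated immediately after Proposition~\ref{prop:sipbcj} with no proof, as if it were an immediate consequence. The direction you call $(\Leftarrow)$ is exactly the intended reading of the symmetry $\bar{x}\bar{y}\bar{z}=\bar{x}\bar{y}\bar{w}=\bar{x}\bar{z}\bar{w}=\bar{y}\bar{z}\bar{w}$ established at the end of the proposition's proof, since the generator associated to the zero class vanishes. The direction you call $(\Rightarrow)$ --- that $\bar{x}\bar{y}\bar{z}\neq 0$ in $B_3$ once all four boundary classes are nonzero in $H_1(S,\Z_2)$ --- is genuinely nontrivial and is nowhere justified in the paper; your quadratic-refinement argument supplies it correctly. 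Evaluating Boolean polynomials at quadratic refinements of the mod-$2$ intersection form is precisely how Johnson builds $B$ (the Birman--Craggs homomorphisms correspond to the mod-$2$ self-linking forms, as the paper itself notes, and $B$ is isomorphic to the algebra of $\Z_2$-valued functions on the set of refinements), so your homomorphisms $\varphi_q:B\to\Z_2$ are well defined. The heart of the matter is your analysis of the possible $\Z_2$-linear dependences among $\{x,y,z\}$, and it is complete: singleton dependences are excluded because $x,y,z\neq 0$; the full dependence $x+y+z=0$ is excluded because that sum equals $w\neq 0$; and a pair dependence such as $x=y$ imposes only the constraint $q(x)=q(y)$, which is automatic for any function of homology classes, while the vanishing of the pairwise intersections of the disjoint boundary curves guarantees no parity obstruction from the refinement identity. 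What your route buys is an actual proof of the ``only if'' half, which a reader of the paper would otherwise have to reconstruct from Johnson's description of $B$.
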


Observe that an SIP-map, $[T_a,T_b]$, where $a$ or $b$ is a separating curve, is always in $\K(S)$ and sometimes in the kernel of $\sigma.$  We call these \emph{separating SIP-maps}.  Let SSIP$(S)$ be subgroup generated by separating SIP-maps.  We will compute the image of SSIP$(S)$ under $\sigma$ and deduce that SSIP$(S)$ is a proper subgroup of $\K(S)$.

\begin{thm}
\label{thm:ssip}
The image of the subgroup generated by separating SIP-maps, that is SSIP$(S_{g,1})$, under the Birman-Craggs-Johnson homomorphism, $\sigma$, is $\langle 1, \bar{a}_i,   \bar{b}_i,   \bar{a}_i \bar{b}_j,   \bar{a}_i\bar{b}_i+\bar{a}_j \bar{b}_j  |  1 \leqslant i, j \leqslant g, i \ne j \rangle$.  
\end{thm}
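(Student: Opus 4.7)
The plan is to apply Proposition~\ref{prop:sipbcj}, which gives $\sigma([T_a,T_b])=\bar{x}\bar{y}\bar{z}$ where $w,x,y,z$ are the four boundary components of the lantern associated to an SIP-map, and then specialize this formula using the homological constraints forced by the presence of a separating curve.

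First I would analyze a separating SIP-map $[T_a,T_b]$ in which $a$ is separating. Because $a$ is an essential curve in the lantern $L$ with $i(a,b)=2$, it cuts $L$ into two pairs of pants $P_A,P_B$, distributing the four boundary components as $\{w,x\}$ and $\{y,z\}$. Since $a$ is separating in $S_{g,1}$, we have $[a]=0$ in $H_1(S_{g,1})$, so the pair-of-pants boundary relations $[a]+[w]+[x]=0$ and $[a]+[y]+[z]=0$ yield $\bar{w}=\bar{x}$ and $\bar{y}=\bar{z}$ in $H_1(S_{g,1},\Z_2)$. Applying Proposition~\ref{prop:sipbcj} together with the square-free relation $\bar{y}^2=\bar{y}$ then collapses the cubic $\bar{x}\bar{y}\bar{z}$ to the degree-at-most-two polynomial $\bar{x}\bar{y}$, where $\bar{x}\in H_1(A,\Z_2)$ and $\bar{y}\in H_1(B,\Z_2)$ lie in complementary symplectic subspaces of $H_1(S_{g,1},\Z_2)$.

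Next I would treat the case in which both $a$ and $b$ are separating. The constraint from $b$ is analogous: $b$ also bisects $L$ into two pairs of pants and forces a further identification among the $\bar{w},\bar{x},\bar{y},\bar{z}$. Combining this with the constraints from $a$ collapses all four $\Z_2$-homology classes to a single class $\bar{w}$, so that $\sigma([T_a,T_b])=\bar{w}$ is a degree-one element. This is the mechanism producing the generators $\bar{a}_i$ and $\bar{b}_i$.

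Having classified the simplified form of $\sigma$ in each case, the proof splits into two containments. For $\sigma(\SIP(S_{g,1}))\subseteq\langle\ldots\rangle$, I would expand $\bar{x}\bar{y}$ (respectively $\bar{w}$) in the standard symplectic basis and, using the fact that $\bar{x},\bar{y}$ come from complementary symplectic summands realized by the two sides of a separating curve, check that every term lies in the listed generating set. For the reverse containment, I would invoke the change of coordinates principle to build explicit separating SIP-maps realizing each listed generator: the off-diagonal products $\bar{a}_i\bar{b}_j$ from $a$-separating SIP-maps with suitably placed arc of $b$; the symmetric combinations $\bar{a}_i\bar{b}_i+\bar{a}_j\bar{b}_j$ as sums of two such SIP-maps whose extraneous terms cancel; the linear terms $\bar{a}_i,\bar{b}_i$ from both-separating SIP-maps; and $1$ from an appropriate combination.

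The main obstacle is the meticulous case analysis required to match the image precisely to the claimed generating set: on one hand one must show that the various pairs $(\bar{x},\bar{y})$ produced by all possible separating curves and all possible bounding arcs $b_A,b_B$ never escape the listed span, and on the other hand one must exhibit enough concrete configurations to hit every generator. The Sp-equivariance of $\sigma$ and the classification of bounding pairs inside a fixed side of the separating curve will be the principal tools for this bookkeeping.
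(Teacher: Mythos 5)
Your route is genuinely different from the paper's, and it is viable. The paper does not invoke Proposition~\ref{prop:sipbcj} here at all: it writes a separating SIP-map as $[T_c,T_d]=T_cT_{T_d(c)}^{-1}$, a product of two separating twists, applies Johnson's formula $\sigma(T_c)=\sum\bar{a}_i\bar{b}_i$ to each factor, and then does a term-by-term computation of how $T_d$ acts on $\sigma(T_c)$ to show that diagonal terms $\bar{a}_i\bar{b}_i$ only survive in pairs. Your reduction --- $\bar{x}\bar{y}\bar{z}$ collapses to $\bar{x}\bar{y}$ because the separating curve forces $\bar{y}=\bar{z}$, with $x$ and $y$ on opposite sides --- makes the containment $\sigma(\text{SSIP}(S_{g,1}))\subseteq\langle\cdots\rangle$ conceptually cleaner, but you must make the decisive point explicit: writing $x=\sum(\alpha_ia_i+\beta_ib_i)$ and $y=\sum(\gamma_ia_i+\delta_ib_i)$, the coefficient of $\bar{a}_i\bar{b}_i$ in $\bar{x}\bar{y}$ is $\alpha_i\delta_i+\beta_i\gamma_i$, and these sum over $i$ to $\hat{i}(x,y)\bmod 2$, which vanishes because $x$ and $y$ are disjoint. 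That one identity replaces the paper's entire Type~(1) computation. Note that individual diagonal terms $\bar{a}_i\bar{b}_i$ \emph{do} appear in such expansions --- only the parity of their total number is controlled --- so your phrase ``every term lies in the listed generating set'' is not literally what gets verified; what you verify is that the diagonal part lies in $\langle\bar{a}_i\bar{b}_i+\bar{a}_j\bar{b}_j\rangle$.

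Two cautions on the reverse containment, which is where the real labor sits. First, the linear generators need not come from SIP-maps with both curves separating; the paper obtains $\bar{a}_1\bar{b}_2+\bar{b}_2$ from a single-separating example (in your notation, $x\equiv a_1+b_2$, $y\equiv b_2$), and your both-separating construction would require all four lantern boundary components congruent to $a_i$ mod $2$ --- exhibit this or fall back on the lower-order terms of the quadratics. Second, $\bar{a}_i\bar{b}_i+\bar{a}_j\bar{b}_j$ cannot arise as a sum of two SIP-maps each contributing one diagonal term: by your own upper bound no separating SIP-map has an odd number of diagonal terms. You need a single SIP-map whose lantern realizes, say, $x\equiv a_i+a_j$ and $y\equiv b_i\pm b_j$ (choosing signs so that $\hat{i}(x,y)=0$ over $\Z$, which is necessary for $x$ and $y$ to lie on opposite sides of a separating curve), producing the pair plus off-diagonal junk, and then cancel the junk against already-realized generators. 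The paper is in fact silent on this realization step, so carrying it out would make your argument more complete than the original.
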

An immediate consequence of this result and Theorem~\ref{thm:johnsonkernel} is the following:
\begin{cor}
Let $S$ be a surface with genus, $g \geq 3$. Then the group generated by separating SIP-maps, SSIP$(S_{g,1})$, is a proper subgroup of $\K(S_{g,1})$.
\end{cor}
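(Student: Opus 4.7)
The plan is to exhibit a concrete element of $\K(S_{g,1})$ that does not lie in SSIP$(S_{g,1})$ by distinguishing them via the Birman-Craggs-Johnson homomorphism $\sigma$. We already know SSIP$(S_{g,1}) \subseteq \K(S_{g,1})$ from the remark preceding Theorem~\ref{thm:ssip} (a separating SIP-map always has a null-homologous boundary component in its associated lantern, so Corollary~\ref{cor:sipkernel} places it in $\K(S_{g,1})$). Thus it suffices to produce $f \in \K(S_{g,1})$ with $\sigma(f)$ outside the subspace $V := \sigma(\text{SSIP}(S_{g,1}))$ described in Theorem~\ref{thm:ssip}.

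The natural candidate is a genus one separating twist $T_c$, which lies in $\K(S_{g,1})$ by Theorem~\ref{thm:johnsonkernel}. Choosing the standard symplectic basis of the genus one subsurface bounded by $c$ to be $\{a_1, b_1\}$, Johnson's formula gives $\sigma(T_c) = \bar a_1 \bar b_1$, a single ``diagonal'' quadratic monomial.

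The main (and only nontrivial) step is to verify that the monomial $\bar a_1 \bar b_1$ does not lie in $V$. Inspecting the generating set of $V$ from Theorem~\ref{thm:ssip}, the only generators involving any diagonal monomial $\bar a_i \bar b_i$ are the paired sums $\bar a_i \bar b_i + \bar a_j \bar b_j$ with $i \neq j$; no generator equals a single $\bar a_i \bar b_i$, and the remaining generators ($1$, $\bar a_i$, $\bar b_i$, and the off-diagonal products $\bar a_i \bar b_j$ with $i \neq j$) contribute nothing to any diagonal coefficient. Define the linear functional $\phi: B_3 \to \Z_2$ that reads off the sum (mod $2$) of the coefficients of the diagonal monomials $\bar a_1 \bar b_1, \ldots, \bar a_g \bar b_g$. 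Each generator of $V$ lies in $\ker \phi$: the paired generators contribute two diagonal terms and so map to $0$, while every other listed generator contributes zero diagonal terms. Hence $V \subseteq \ker \phi$. On the other hand, $\phi(\bar a_1 \bar b_1) = 1$, so $\bar a_1 \bar b_1 \notin V$.

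Combining these observations, $T_c \in \K(S_{g,1}) \setminus \text{SSIP}(S_{g,1})$, so the containment SSIP$(S_{g,1}) \subseteq \K(S_{g,1})$ is proper. The only real obstacle is the $\Z_2$-linear algebra step above (showing no combination of the listed generators produces a lone $\bar a_i \bar b_i$), which the parity functional $\phi$ handles cleanly; genus $g \geq 3$ is used implicitly since $\sigma$ has the stated description precisely in this range and BP-maps generate $\I(S_{g,1})$.
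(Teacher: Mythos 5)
Your argument is correct and is essentially the paper's: the corollary is presented there as an immediate consequence of Theorem~\ref{thm:ssip} together with Theorem~\ref{thm:johnsonkernel}, exactly because a genus one separating twist has $\sigma$-image $\bar a_1\bar b_1$, which your parity functional on the diagonal coefficients correctly shows lies outside $\sigma(\textrm{SSIP}(S_{g,1}))$. One small slip worth fixing: the lantern associated to a separating SIP-map need not have a null-homologous boundary component --- rather, two of its boundary components are homologous (the classes of the two components on one side of the separating defining curve sum to zero) --- but this still places the map in $\K(S_{g,1})$ by the second clause of Corollary~\ref{cor:sipkernel}, so the containment $\textrm{SSIP}(S_{g,1}) \subseteq \K(S_{g,1})$ and hence your proof are unaffected.
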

\begin{proof}[Proof of Theorem~\ref{thm:ssip}:]
There are four basic types of generators of $B_2$:
\begin{enumerate}
	\item $\bar{a}_i \bar{b}_i$ 
	\item $\bar{a}_i \bar{b}_j$ with $i \neq j$ (this also includes $\bar{a}_i \bar{a}_j, \bar{a}_i\bar{b}_j,$ and $\bar{b}_i \bar{b}_j$ )
	\item $\bar{a}_i$ ($\bar{b}_i$)
	\item $1$
\end{enumerate}
Elements of type (1) we will refer to as $\emph{symplectic terms}$ and elements of type (2) will be $\emph{nonsymplectic}$. We will show that elements of type (2), (3) and (4) are in $\sigma(\SIP(S) \cap \K(S))$.  Then we will show which elements generated by type (1) terms are in the image.   Recall all coefficients in $B_2$ are in $\mathbb{Z}/2\mathbb{Z}$.

Note that we will only consider separating SIP-maps, that is, SIP-maps where at least one of the defining curves is separating as in Figure~\ref{type1a}.

\textbf{Type (2):}
\begin{figure}[h]
\begin{center}
\includegraphics[scale = .5]{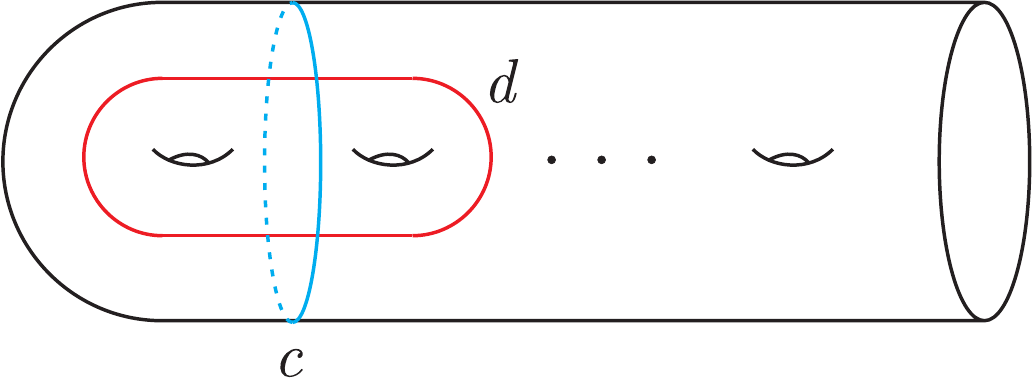}
\caption{An SIP-map, $[T_c,T_d]$ with $c$ a separating curve.}
\label{type1a}
\end{center}
\end{figure}
Suppose $c$ and $d$ are as shown.  Then
\begin{eqnarray} 
\sigma([T_c,T_d]) & = & \sigma(T_cT_dT_c^{-1}T_d^{-1}) \nonumber \\
& = & \sigma(T_c)+\sigma(T_{T_d(c)}^{-1}) \nonumber \\
& = & \sigma(T_c)+\sigma(T_{T_d(c)}) \nonumber 
\end{eqnarray} 
It is not hard to see that 
$$\sigma(T_c) = \bar{a}_1\bar{b}_1.$$
Further $T_d(c)$ is shown below with symplectic basis consisting of $a_1+b_2,$ and $b_1$ (where $a_i$ and $b_i$ are from the standard symplectic basis as shown in Figure~\ref{figure:homologybasis}.  \\

\begin{figure}[h]
\begin{center}
\includegraphics[scale = .5]{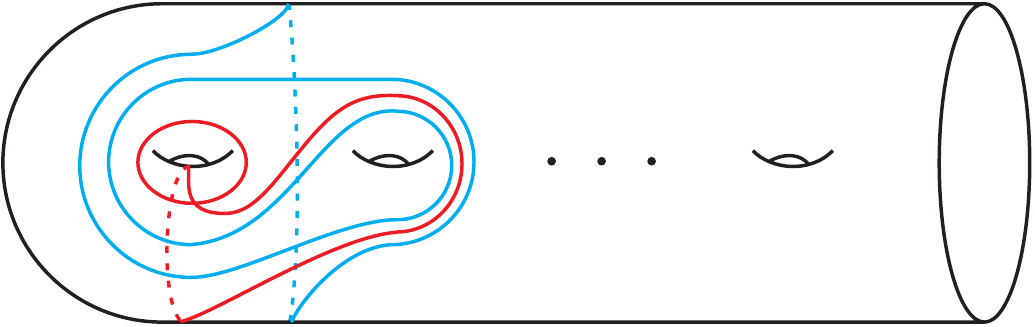}
\caption{$T_d(c)$ with symplectic basis $a_1+b_2,$ and $b_1$}
\label{type1b}
\end{center}
\end{figure}
Hence we see that $\sigma(T_{T_d(c)})  =  (\overline{a_1+b_2}) \bar{b}_1 =  \bar{a}_1 \bar{b}_1 + \bar{b}_1 \bar{b}_2$. Therefore
$\sigma([T_c,T_d]) =  \bar{b}_1 \bar{b}_2.$
By change of coordinates we can get all elements of Type (2).

\textbf{Type (3):}
\begin{figure}[h]
\begin{center}
\includegraphics[scale = .7]{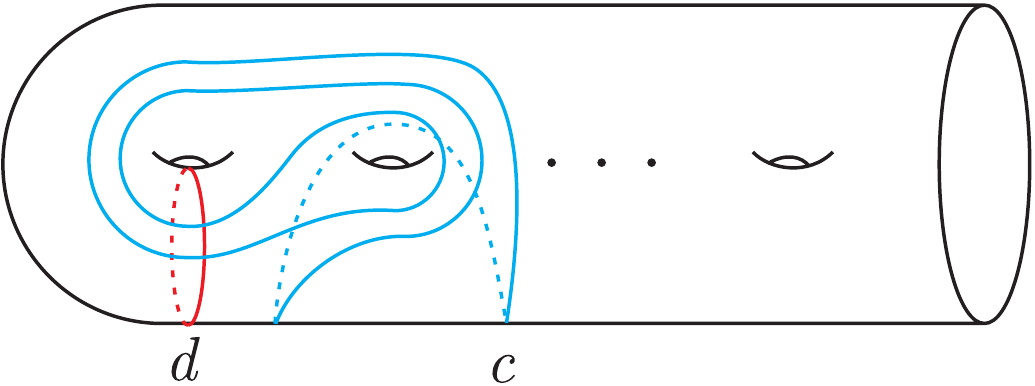}
\caption{SIP-map for Type (3)}
\label{type3a}
\end{center}
\end{figure}
Suppose $c$ and $d$ are as shown in Figure~\ref{type3a}. We want to find $\sigma([T_c,T_d])= \sigma(T_c)+\sigma(T_{T_d(c)}) .$  As shown in Figure~\ref{type3b}, $c$ has symplectic basis $b_2$ and $b_1+a_2+b_2$.

\begin{figure}[h]
\begin{center}
\includegraphics[scale = .7]{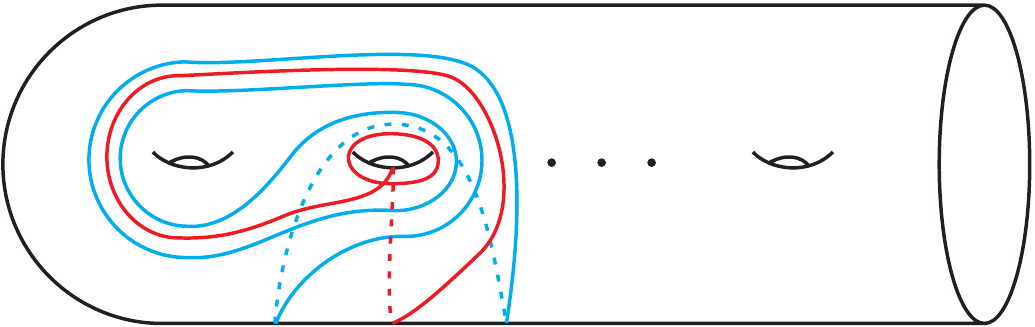}
\caption{Curve $c$ with symplectic basis $b_2$ and $b_1+a_2+b_2$.}
\label{type3b}
\end{center}
\end{figure}

Thus $$\sigma(T_c) = \bar{b}_2(\overline{b_1+a_2+b_2}) = \bar{b}_2(\bar{b}_1 + \bar{a}_2+\bar{b}_2 + 1) = \bar{b}_1  \bar{b}_2+\bar{a}_2\bar{b}_2 $$  Further $T_d(c)$ is shown in Figure~\ref{type3c} with symplectic basis consisting of $b_2$ and $a_1+b_1+a_2+b_2$. 

\begin{figure}[ht]
\begin{center}
\includegraphics[scale = .7]{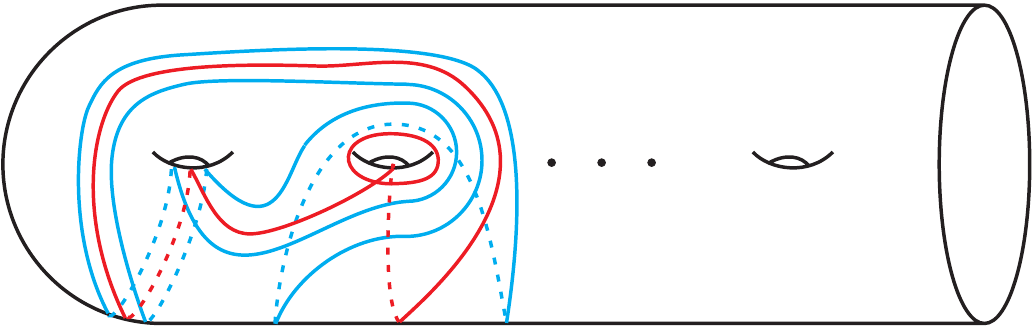}
\caption{Curve $T_d(c)$ with symplectic basis $b_2$ and $a_1+b_1+a_2+b_2$.}
\label{type3c}
\end{center}
\end{figure}

Hence we see that
$$\sigma(T_{T_d(c)})  =  (\bar{b}_2) (\overline{a_1+b_1+a_2+b_2}) =  \bar{a}_1\bar{b}_2+\bar{b}_1\bar{b}_2+\bar{a}_2\bar{b}_2+\bar{b}_2.$$

Therefore
$$\sigma([T_c,T_d]) = \bar{a}_1\bar{b}_2+\bar{b}_2.$$
Since we can get all Type (2) elements by themselves, composing with the appropriate SIP-maps and using change of coordinates we can get all elements of Type (3).

\textbf{Type (4):}
\begin{figure}[h]
\begin{center}
\includegraphics[scale = .7]{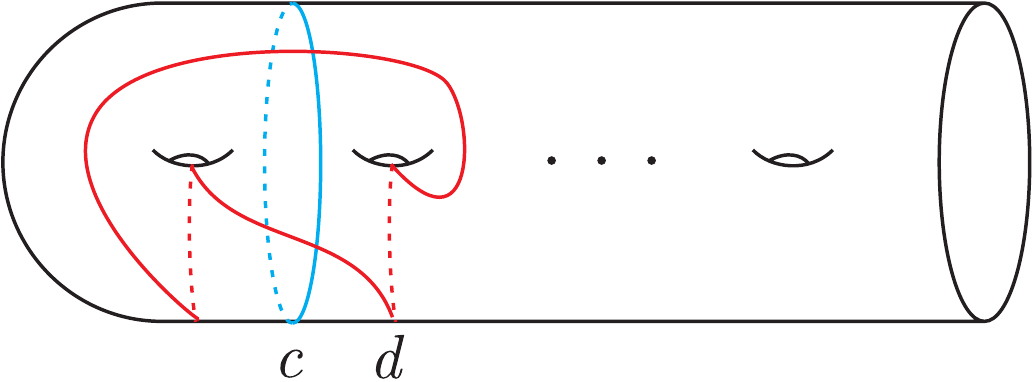}
\caption{SIP-map for Type (4).}
\label{type4a}
\end{center}
\end{figure}
Suppose $c$ and $d$ are as shown in Figure~\ref{type4a}. We want to find $\sigma([T_c,T_d])= \sigma(T_c)+\sigma(T_{T_d(c)}) .$  
Clearly $c$ has symplectic basis $a_1$ and $b_1$, hence $\sigma(T_c) = \bar{a}_1\bar{b}_1.$  Further $T_d(c)$ is shown in Figure~\ref{type4b} with symplectic basis consisting of $a_1+a_2+b_2$ and $b_1+a_2+b_2$. 

\begin{figure}[h]
\begin{center}
\includegraphics[scale = .7]{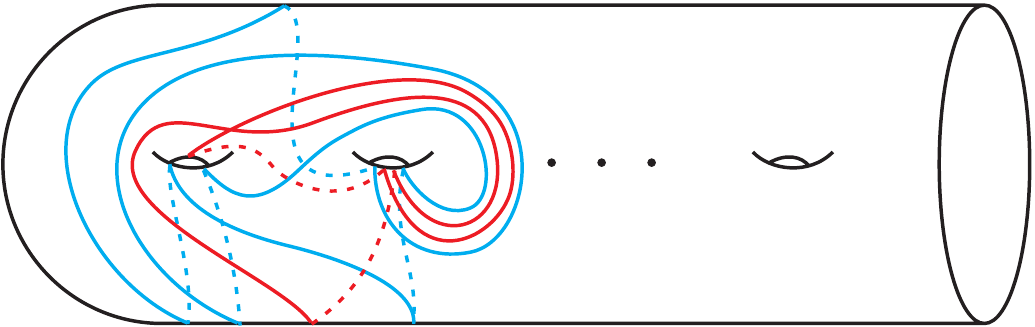}
\caption{Curve $T_d(c)$ with symplectic basis $a_1+a_2+b_2$ and $b_1+a_2+b_2$.}
\label{type4b}
\end{center}
\end{figure}
Hence
\begin{eqnarray}
\sigma(T_{T_d(c)}) & = & (\overline{a_1+a_2+b_2}) (\overline{b_1+a_2+b_2}) \nonumber \\ 
& = & (\bar{a}_1+\bar{a}_2+\bar{b}_2+1)(\bar{b}_1+\bar{a}_2+\bar{b}_2+1) \nonumber \\
& = & \bar{a}_1\bar{b}_1+\bar{a}_1\bar{a}_2+\bar{a}_1\bar{b}_2+\bar{a}_2\bar{b}_1+\bar{b}_1\bar{b}_2+\bar{b}_1+\bar{a}_2+\bar{b}_2+1\nonumber
\end{eqnarray}
Therefore
$$ \sigma([T_c,T_d]) =  \bar{a}_1\bar{a}_2+\bar{a}_1\bar{b}_2+\bar{a}_2\bar{b}_1+\bar{b}_1\bar{b}_2+\bar{b}_1+\bar{a}_2+\bar{b}_2+1.$$
By change of coordinates and because we can get Type (2) and Type (3) elements we can compose by the appropriate SIP-maps to get all Type (4) elements.

\textbf{Type (1): }
Now let us consider which elements of Type (1) are in $\sigma(\text{SSIP}(S))$. Let $c$ be any separating curve.  $\sigma(T_c)$ must have a symplectic term (that is, a term of the form $\bar{a}_i \bar{b}_i$) and possibly other terms.  We know
\begin{eqnarray}
\sigma([T_c,T_d]) & = & \sigma(T_cT_dT_c^{-1}T_d^{-1}) \nonumber \\
& = & \sigma(T_c)+\sigma(T_dT_c^{-1}T_d^{-1}) \nonumber \\
& = & \sigma(T_c)+T_d \sigma(T_c^{-1}) \nonumber \\
& = & \sigma(T_c)+T_d \sigma(T_c) \nonumber 
\end{eqnarray}
Let $[d]=\alpha_1a_1+\cdots + \alpha_ga_g+\beta_1b_1+ \cdots +\beta_gb_g.$  Without loss of generality, suppose $\bar{a}_1\bar{b}_1$ is a term in $\sigma(T_c)$. 

Then using the fact that $[T_b^k(a)] = [a]+k \hat{i}(a,b)[b]$, we see
\begin{eqnarray}
T_d(\bar{a}_1\bar{b}_1) & = & T_d(\bar{a}_1)T_d(\bar{b}_1) \nonumber \\
& = &(\overline{T_d(a_1)})  (\overline{T_d(b_1)}) \nonumber \\ 
& = & (\overline{a_1+\beta_1[d]}) (\overline{b_1+\alpha_1[d]}) \nonumber
\end{eqnarray}
Hence the $\Z_2$ coefficients of the symplectic terms are:
$$\bar{a}_1 \bar{b}_1:   \alpha_1^2\beta_1^2 + (1+\beta_1\alpha_1)(1+\alpha_1\beta_1) = 1$$
$$\bar{a}_i \bar{b}_i:   (\beta_1\alpha_i)(\alpha_1\beta_i)+(\beta_1\beta_i)(\alpha_1\alpha_i) = 0,    \forall i : 1 < i \leq g$$
So in $\sigma([T_c,T_d])$ the $\bar{a}_1\bar{b}_1$ terms will cancel out.  Now suppose $\bar{a}_1\bar{b}_2$ is a term in $\sigma(T_c)$, similarly for any other nonsymplectic term.  Then
\begin{eqnarray}
T_d(\bar{a}_1\bar{b}_2) & = & (\overline{T_d(a_1)})(\overline{T_d(b_2)}) \nonumber \\
& = & (\overline{a_1 + \beta_1[d]})(\overline{b_2+\alpha_2[d]}) \nonumber
\end{eqnarray}
Again, we only need to consider the symplectic coefficients.
$$\bar{a}_1 \bar{b}_1:   (1+\beta_1\alpha_1)(\alpha_2\beta_1) + \beta_1^2(\alpha_2\alpha_1) = \alpha_2\beta_1$$
$$\bar{a}_2 \bar{b}_2:   (\beta_1\alpha_2)(1+\alpha_2\beta_2)+(\beta_1\beta_2)(\alpha_2^2) = \alpha_2\beta_1$$
$$\bar{a}_i \bar{b}_i:   (\beta_1\alpha_i)(\alpha_2\beta_i)+(\beta_1\beta_i)(\alpha_2\alpha_i) = 0,   \forall i : 2 < i \leq g $$
Notice the $\bar{a}_1 \bar{b}_1$ and $\bar{a}_2 \bar{b}_2$ coefficients are the same and the other symplectic terms have coefficient 0.  Hence we get a sum of two symplectic terms in $\sigma([T_c,T_d])$.  This is the case for any nonsymplectic term; the only way an $\bar{a}_i\bar{b}_i$ term will appear in $\sigma([T_c,T_d])$ is in a pair.

Suppose $\bar{a}_1$, or any other linear term, is in $\sigma(T_c).$  Then $T_d(\bar{a}_1) = (\overline{a_1+\beta_1[d]})$ which has no terms of degree two.  Similarly if $1$ is in $\sigma(T_c)$, then $T_d(1) = 1$ because the action of $\Mod(S)$ on $B_2$ is a linear isomorphism.  Thus all symplectic terms in $\sigma(\text{SSIP}(S))$ are in $\langle  \bar{a}_i \bar{b}_i+\bar{a}_j \bar{b}_j | 1 \leq i, j \leq g, i \ne j \rangle$.
\end{proof}

\section{Areas for further study}
In this paper we have done a preliminary investigation of SIP-maps and the group they generate, $\SIP(S)$.  We have factored SIP-maps into the product of 5 BP-maps, shown the image of SIP-maps under well-known representations of the Torelli group, and characterized which SIP-maps are in $\K(S)$ and the kernel of the Birman-Craggs-Johnson homomorphism.  Further we have shown $\SIP(S) \neq \I(S)$ and is in fact an infinite index subgroup when $g \geq 3$.  We have also given several equivalent descriptions of the group $\langle \SIP(S), \K(S) \rangle$.  We have also found a new interpretation in terms of SIP-maps of a relation among Johnson generators of $\I(S)$.  

This work leads to many questions about SIP-maps as well as the structure of $\SIP(S)$ that deserve further investigation.  For example:
\begin{itemize}
\item Is $\I(S) / \SIP(S)$ abelian?
\item Can other relations in $\I(S)$ be reinterpreted in terms of SIP-maps?
\item Is $\SIP(S)$ finitely generated?
\item Is $\SIP(S)$ finitely presentable?
\end{itemize}

\bibliographystyle{plain}
\bibliography{sts}

\end{document}